\documentclass[a4paper]{article}

\usepackage{fouriernc}

\usepackage{amsmath,amsthm,amssymb,latexsym}

\newtheorem{theorem}{Theorem}[section]
\newtheorem{proposition}[theorem]{Proposition}
\newtheorem{lemma}[theorem]{Lemma}

\newtheorem{remark}[theorem]{Remark}

\newtheorem{assumption}[theorem]{Assumption}

\newtheorem{claim}[theorem]{Claim}

\begin{document}

\title{On ergodicity of the SAGA-LD algorithm\thanks{The author gratefully 
acknowledges the support of 
the National Research, Development and Innovation Office (NKFIH) through grants K 143529
and KKP 137490.}}

\author{Mikl\'os R\'asonyi\thanks{HUN-REN Alfr\'ed R\'enyi 
Institute of Mathematics and E\"otv\"os Lor\'and University, Budapest}}

\date{\today}

\maketitle

\begin{abstract}
The so-called SAGA-LD algorithm is used for efficient sampling from high-dimensional distributions in machine learning.
Its intricate dynamics resists standard approaches of Markov chain theory. We prove,
using a model-specific method, that SAGA-LD converges to a limiting
distribution and a law of large numbers holds.
\end{abstract}

\section{Introduction}\label{intro}

Non-convex optimization procedures in machine learning are often based on the Langevin equation
\begin{equation}\label{lange}
dL_{t}=-F(L_{t})\, dt +\sqrt{2}dB_{t},
\end{equation}
where $B_{t}$ is some high-dimensional standard Brownian motion and $F=\nabla U$ for some
function $U\geq 0$. Under mild conditions, the process $L_{t}$ has an invariant density proportional
to $\exp(-U)$. A simple way of (approximate) sampling from this density would be Euler-Maruyama approximations
of \eqref{lange}. In practice, however, $F$ is not available or prohibitively expensive to evaluate.

We consider the case where $F(x)$ is of the form $\frac{1}{N}\sum_{i=1}^{N}F_{i}(x)$ for some functions
$F_{i}:\mathbb{R}^{d}\to\mathbb{R}^{d}$, $i=1,\ldots, N$.
An often used sampling algorithm was first proposed in \cite{teh}: Stochastic Gradient Langevin Dynamics (SGLD), 
initialized from  some $\tilde{X}_{0}\in\mathbb{R}^{d}$, and given by the recursion
\begin{equation}\label{sgld}
\tilde{X}_{n+1}=\tilde{X}_{n}-\eta F_{S_{n}}(\tilde{X}_{n})+\sqrt{2\eta}\xi_{n+1},	
\end{equation}
where $\xi_{n}$ are an i.i.d. sequence of $d$-dimensional standard Gaussian variables, $\eta>0$ is the step size, and
$S_{n}$ are a sequence of i.i.d. uniform draws from the set $\{1,2,\ldots,N\}$.
In practice, i.i.d. \emph{minibatches} are drawn from the $F_{i}$ and their average is used in \eqref{sgld} instead of just
$F_{S_{n}}(X_{n})$. For the present, theoretical study we contend with this simplified version of SGLD. 

Variance-reduction techniques produced several variants of SGLD, most notably SAGA-LD and SVRG-LD,
see e.g. \cite{chatterji}. They show superior performance at the price of a more involved
dynamics. In the present paper we concentrate on the SAGA-LD algorithm, to be defined in Section \ref{2} below.


That algorithm is mathematically challenging. SAGA-LD, as a Markov chain, does not seem to satisfy 
either of the two standard hypotheses of 
stochastic stability theory for general state space Markov
chains: Foster-Lyapunov-type drift condtion 
and minorization on a small set, see e.g.\ \cite{mt}.
As these two standard criteria fail,
we take an unusual route in this paper, originating
from \cite{eureka}, that establishes stochastic stability and the convergence of
ergodic averages via showing a strong mixing property. In doing so we do not (and cannot)
rely on, say, Harris recurrence, but on the construction of specific couplings based on the 
dynamics of SAGA-LD.

In Section \ref{2} we formulate precisely our hypotheses and main result Theorem \ref{main}:
ergodic averages of functionals of the SAGA-LD iterates converge in probability.
The latter result provides a reassuring theoretical guarantee 
for numerical work with SAGA-LD.
 
Remark \ref{discussions} explains its significance while Remark \ref{conclusions} discusses
possible future directions. Section \ref{coupling}
contains the proofs. Section \ref{alpha} presents background material about strongly mixing processes.

\section{Definitions and results}\label{2}

Fix our probability space $(\Omega,\mathcal{F},P)$, $E[\cdot]$ denotes the corresponding expectation. 
Let an i.i.d.\ sequence $\xi_{n}$, $n\in\mathbb{N}$ of $d$-dimensional standard Gaussian random variables be given.
Furthermore, let a sequence of i.i.d.\ random variables $S_{n}$, $n\in\mathbb{N}$ be drawn uniformly
from the set $\mathcal{N}:=\{1,2,\ldots,N\}$. The initial state of the sampling algorithm is a random variable (or just
a constant) $X_{0}\in\mathbb{R}^{d}$. 
We assume that
$X_{0}$, $(\xi_{n})_{n\in\mathbb{N}}$, $(S_{n})_{n\in\mathbb{N}}$ are independent. Scalar product in $\mathbb{R}^{d}$
is denoted $\langle\cdot,\cdot\rangle$; $|\cdot|$ is the Euclidean norm in $\mathbb{R}^{m}$, where $m$ is always clear from
the context. $\mathcal{B}(\mathbb{R}^{m})$ are the Borel sets of $\mathbb{R}^{m}$. 

The ``Stochastic Average Gradient Langevin
Dynamics'' (SAGA-LD) is then defined as follows: there is an $N\times d$-dimensional process $G_{n}$, $n\in\mathbb{N}$ which stores
the current estimates for the respective functions 
$F_{i}$, $i=1,\ldots,N$. Indeed, initialize this process by $G_{0}^{i}:=F_{i}(X_{0})$, $i=1,\ldots,N$ and set
\begin{equation}\label{gradient}
G^{i}_{n+1}=1_{\{S_{n}\neq i\}}G^{i}_{n}+1_{\{S_{n}=i\}}F_{i}(X_{n}),\ n\in\mathbb{N}.	
\end{equation}
The $d$-dimensional state variable for sampling is defined by the recursion
\begin{equation}\label{state}
X_{n+1}=X_{n}-\frac{\eta}{N}\sum_{i=1}^{N}G^{i}_{n}-
\eta(F_{S_{n}}(X_{n})-G^{S_{n}}_{n})+\sqrt{2\eta}\xi_{n+1},\ n\in\mathbb{N}.	
\end{equation}

Here the updates 
\begin{equation}\label{tilde}
\frac{\eta}{N}\sum_{i=1}^{N}G^{i}_{n}+
\eta(F_{S_{n}}(X_{n})-G^{S_{n}}_{n}) 
\end{equation}
replace the term $\eta F_{S_{n}}(\tilde{X}_{n})$ appearing in the SGLD algorithm \eqref{sgld} above.

Our assumptions are listed below. They essentially coincide with those of \cite{difan}
and are weaker than those of \cite{chatterji} (they do not imply
convexity of the function $U$ in Section \ref{intro}).

\begin{assumption}\label{init} 
$$
E[|X_{0}|^{2}]<\infty.
$$
\end{assumption}

\begin{assumption}\label{dissip}
There exist $c_{1},c_{2}>0$ such that
$$
\left\langle \frac{1}{N}\sum_{i=1}^{N}F_{i}(x),x\right\rangle\geq c_{2}|x|^{2}-c_{1},\ x\in\mathbb{R}^{d}.
$$
We may and will assume $c_{2}\leq 1$.
\end{assumption}

\begin{assumption}\label{lip} For each $1\leq i\leq N$, $F_{i}$ is Lipschitz continuous, 
that is, there is $M>0$ such that
$$
|F_{i}(x)-F_{i}(y)|\leq M|x-y|
$$ 
for each $i$, and for all $x,y\in \mathbb{R}^{d}$. We may and will assume $M\geq 1$.
\end{assumption}

The total variation distance of two probabilities $\mu_{1},\mu_{2}$ on $\mathcal{B}(\mathbb{R}^{d})$ is
defined as
$$
d_{TV}(\mu_{1},\mu_{2}):=\inf_{X_{1}\sim \mu_{1},X_{2}\sim \mu_{2}}P(X_{1}\neq X_{2}),
$$
here the infimum ranges over all random variables $X_{1}$ (resp. $X_{2}$) with law $\mu_{1}$ (resp. $\mu_{2}$).
The main achievement of the current paper is stated now. 

\begin{theorem}\label{main} Let Assumptions \ref{init}, \ref{dissip} and \ref{lip} hold. There is $\eta_{0}>0$ such that,
for $0<\eta\leq \eta_{0}$ 
there exists a probability $\mu_{*}=\mu_{*}(\eta)$ on $\mathbb{R}^{d}\times\mathbb{R}^{Nd}$ such that 
$$
d_{TV}(\mathrm{Law}(X_{n},G_{n}),\mu_{*})\to 0,\ n\to\infty.
$$	
For all measurable 
functions $\phi:\mathbb{R}^{(N+1)d}\to\mathbb{R}$ such that 
\begin{equation}\label{cillag}
|\phi(u)|\leq C(1+|u|)	
\end{equation} 
with some $C>0$,
\begin{equation}\label{lln}
E\left[\left|\frac{\phi(X_{1},G_{1})+\ldots+\phi(X_{n},G_{n})}{n}-\int_{\mathbb{R}^{(N+1)d}}\phi(u)\mu_{*}(du)\right|\right]\to 0,
\ n\to\infty.
\end{equation}
A fortiori, the averages converge in probability. 
\end{theorem}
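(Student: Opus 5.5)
The plan is to follow the route indicated in the introduction. We show that the process $Z_{n}:=(X_{n},G_{n})$ is ``eventually coupled'' from any two initial conditions, using a coupling built from the dynamics rather than a drift/minorization pair. The same coupling then yields convergence in total variation and an $\alpha$-mixing bound. Together with uniform moment bounds, the mixing bound gives the law of large numbers \eqref{lln} via the covariance inequalities for strongly mixing processes recalled in Section \ref{alpha}.

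\textbf{Step 1: uniform second moments.} We prove $\sup_{n}E[|X_{n}|^{2}+|G_{n}|^{2}]<\infty$ for $\eta\le\eta_{0}$. Since $G^{i}_{n}=F_{i}(X_{n-\tau_{i}(n)})$, where $\tau_{i}(n)$ is the (geometric) age of the last refresh of slot $i$, we can write the drift as $F(X_{n})$ plus an error. This error is bounded by $M\sum_{i}|X_{n}-X_{n-\tau_{i}(n)}|$ plus a mean-zero term. Assumption \ref{dissip} gives the contraction $E|X_{n+1}|^{2}\le(1-c\eta)E|X_{n}|^{2}+C\eta^{2}\sum_{k\le n}\rho^{n-k}E|X_{k}|^{2}+C$. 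Here $\rho=1-1/N$, and Assumption \ref{lip} bounds the increments $X_{n}-X_{n-\tau}$ by $\eta$ times earlier states. A discrete Gronwall argument with a geometrically weighted Lyapunov functional $V_{n}=|X_{n}|^{2}+\kappa\eta\sum_{i}|G^{i}_{n}-F_{i}(X_{n})|^{2}$ closes the estimate for small $\eta$, with constants independent of the initial law beyond Assumption \ref{init}.

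\textbf{Step 2: coupling.} Take two copies started from $z,z'$ and drive them with the \emph{same} $S_{n}$. On a block of length $K$ (of order $N\log N$), every index is refreshed with probability bounded below. Inside the block we use a synchronous/reflection-type Gaussian coupling so that the $X$-components become equal at some time $m$ with probability $\ge p(R)>0$, provided both states lie in a ball of radius $R$. This is a one-step TV coupling of two Gaussians with close means, whose overlap is bounded below since the means differ by at most $C(R)$. Once $X_{m}=X'_{m}$, we keep $\xi$ identical. Then $X$ agrees thereafter, because the drift differs only through $G_{n}-G'_{n}$; to handle this we use the mean shift instead. Concretely, we choose the Gaussian coupling at each step to absorb the difference $\eta\bigl(\frac1N\sum_{i}(G^{i}-G'^{i})-(G^{S}-G'^{S})\bigr)$ via a maximal coupling of $N(a,2\eta I)$ and $N(a',2\eta I)$. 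This succeeds with probability $\ge 1-C|a-a'|/\sqrt\eta$. Slots are refreshed with identical values $F_{i}(X_{n})$, so after all $N$ slots have been refreshed (a coupon-collector time) we have $G=G'$, and the chains coincide forever. Combining this with Step 1, which makes the ball of radius $R$ visited with positive frequency, a standard renewal argument gives $P(Z_{n}\ne Z'_{n})\to0$, uniformly over initial laws with bounded second moment.

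\textbf{Step 3: conclusions.} By the uniform coupling, $\mathrm{Law}(Z_{n})$ is Cauchy in TV: compare $Z$ started from $Z_{0}$ with the chain started from $Z_{k}$, which has bounded moments. The limit $\mu_{*}$ is therefore invariant, and the first claim follows. Applying the same coupling conditionally on $\mathcal{F}_{k}$ gives $\alpha(n)\to0$ for the mixing coefficients of $(Z_{n})$. For \eqref{lln}, we truncate $\phi$ at level $L$: the tail contributes at most $C\sup_{n}E[|Z_{n}|1_{|Z_{n}|>L}]$, which is small by Step 1. The bounded part has variance of the average bounded by $n^{-2}\sum_{j,k}4\|\phi_{L}\|_{\infty}^{2}\alpha(|j-k|)\to0$, and its mean converges to $\int\phi_{L}\,d\mu_{*}$ by the TV convergence.

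The main obstacle is Step 2. Coalescence of $X$ alone is not absorbing, since the stale gradients $G$ retain the memory of the past, and the maximal-coupling success probability must be controlled while the $G$-discrepancies are flushed out over roughly $N\log N$ steps. I expect to obtain a positive but non-explicit lower bound on this probability on bounded sets. Consequently only $\alpha(n)\to0$, and no explicit rate, will be available.
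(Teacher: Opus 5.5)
Your proposal is correct, but its core (Step 2) and the law of large numbers take a different route from the paper.

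\textbf{Coupling.} The paper never has to control the mean shift caused by stale gradients. In the random-map representation of Lemma \ref{function}, whenever $x\in B(K)$ and all $g^{i}\in B(\hat{M}+MK)$, with probability $\beta$ the new $X$ is drawn from the uniform law on $B(1)$ regardless of the state (Claim \ref{lower}). On a block of $N+1$ steps where this happens every time and $S$ runs through $1,\ldots,N$, both copies therefore receive the \emph{same} fresh $X$ at each step, and every slot is refreshed at a common point (Claim \ref{fontos}). You instead use a Markovian coupling. One maximal coupling merges $X$; then maximal couplings of Gaussians whose means differ by $\eta\bigl(\frac{1}{N}\sum_{i}(G^{i}-G'^{i})-(G^{S}-G'^{S})\bigr)$ keep it merged until a coupon-collector time. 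This is sound if you merge at the first step of a block: the gap then involves only slots last refreshed before the block or at that step, so it is bounded on the ball.

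Two points need tightening. First, the bound $1-C|a-a'|/\sqrt{\eta}$ can be vacuous, since $R=R(\varepsilon)\to\infty$ while $\eta$ is fixed. Use the exact Gaussian overlap instead; it is a positive function of $|a-a'|/\sqrt{\eta}$ and hence bounded below on bounded gaps. Second, your ``renewal argument'' is best done as in the paper. Markov's inequality at the deterministic block starts gives $P(D_k)\ge 1-2\varepsilon$ for both chains, and absorption of coalescence then gives $p_{k+1}\ge p_k+\delta(1-p_k-2\varepsilon)$.

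\textbf{Mixing.} The paper compares with a chain restarted from a fixed point under the same random maps (Lemma \ref{pixi}). You use the Markov property and a coupling from two independent copies of $\mathrm{Law}(Z_k)$. That needs moment bounds for arbitrary $G_0$, which your functional $V$ supplies. Lemma \ref{l2} covers only $G_0=F(X_0)$, which suffices in the paper because it only couples time-shifted copies of the original chain.

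\textbf{Law of large numbers.} For \eqref{lln} the paper invokes Hansen's theorem (Theorem \ref{wlln} with Lemma \ref{suf}). Your truncation argument with $|\mathrm{cov}(\phi_L(Z_j),\phi_L(Z_k))|\le 4\|\phi_L\|_\infty^2\alpha(|j-k|)$ is a self-contained proof of the special case needed here.

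\textbf{A rate.} Your route gives more than you claim. Checked conditionally on $(X_n,G_n)$, which is legitimate because the SAGA direction is conditionally unbiased, $V$ satisfies $E[V(Z_{n+1})\mid Z_n]\le(1-c\eta)V(Z_n)+C\eta$. This is a genuine Foster--Lyapunov drift for the pair chain. Your block coupling also shows that the laws after one block, started from any two points of a bounded set, are at total variation distance at most $1-\delta(R)$. With these two ingredients, Harris' theorem would give a geometric convergence rate, which the paper leaves open.
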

 
\begin{remark}\label{discussions}
{\rm It can be shown under suitable assumptions, see e.g.\ \cite{chatterji}, that $\mu_{*}(\eta)$ is close to
the invariant law $\mu$ of \eqref{lange}, for $\eta$ small enough. Then, for large $n$, the law of $X_{n}$
should approximate $\mu_{*}(\eta)$, hence also $\mu$. 

In practice, however, what we need is approximating quantities like $\Phi:=\int_{\mathbb{R}^{d}}\phi(x)\mu(dx)$ for
some function $\phi$, using only \emph{one generated realization} of $X_{n}$, $n\in\mathbb{N}$. In other words,
we need that averages like
$$
\frac{\phi(X_{m+1})+\ldots+\phi(X_{m+n})}{n}
$$
are close to $\Phi^{*}(\eta):=\int_{\mathbb{R}^{d}}\phi(x)\mu_{*}(\eta)(dx)$ for $n$ large. ($\Phi^{*}(\eta)$ will
be close to $\Phi$ for $\eta$ small enough since $\mu_{*}(\eta)$ is close to $\mu$.) To sum up, in order to use SAGA-LD in numerical work
with confidence, a law of large numbers should be established. This is precisely the content of \eqref{lln} in
Theorem \ref{main}. 
}	
\end{remark}

\begin{remark}\label{conclusions} {\rm 
Determining the convergence rate of $(X_{n},G_{n})$ to $\mu_{*}$ is an open problem of interest. It would also be desirable
to have almost sure convergence in \eqref{lln}. We expect that other variance-reduced schemes could be treated along the same lines.}
\end{remark}

\section{Coupling construction}\label{coupling}

Indicator
of an event $A\in\mathcal{F}$ is denoted $1_{A}$.
Define the filtration 
$$
\mathcal{H}_{n}:=\sigma(X_{0},\xi_{j},\ 0\leq j\leq n,S_{j}, 0\leq j<n),\ n\in\mathbb{N}.
$$
Clearly, the process $(X_{n})_{n\in\mathbb{N}}$ is adapted to $(\mathcal{H}_{n})_{n\in\mathbb{N}}$ and 
$S_{n}$ is independent of $\mathcal{H}_{n}$, for all $n\geq 0$. 
Assumptions \ref{init}, \ref{dissip} and \ref{lip} remain in force in the rest of the paper.


\subsection{Moment estimates}

Denote $\hat{M}:=\max_{i=1,\ldots,N}|F_{i}(0)|$.
It follows from Assumption \ref{lip} that, for all $i$, $x$, 
\begin{equation}\label{muc}
|F_{i}(x)|\leq \hat{M}+M|x|.
\end{equation}

A key initial observation is that the process $X_{n}$, $n\in\mathbb{N}$ remains bounded in $L^{2}$.
This was already noticed in Lemma C.7 of \cite{difan} (in the case $X_{0}=0$).

\begin{lemma}\label{l2} If $\eta$ is small enough then
$$
\sup_{n\in\mathbb{N}}E[|X_{n}|^{2}+|G_{n}|^{2}]<\infty,
$$		
\end{lemma}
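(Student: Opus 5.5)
We plan to build a joint Lyapunov function $V_{n}:=E|X_{n}|^{2}+\frac{\kappa\eta}{N}\sum_{i=1}^{N}E|G^{i}_{n}-F_{i}(X_{n})|^{2}$ with a constant $\kappa>0$ to be chosen. We aim for a contraction $V_{n+1}\leq (1-c\eta)V_{n}+C\eta$ for $\eta$ small. This gives $\sup_{n}V_{n}<\infty$ by iteration, since $V_{0}=E|X_{0}|^{2}<\infty$ by Assumption \ref{init} (note $G^{i}_{0}=F_{i}(X_{0})$). The bound on $E|G_{n}|^{2}$ then follows from \eqref{muc}:
$$|G^{i}_{n}|\leq |G^{i}_{n}-F_{i}(X_{n})|+\hat{M}+M|X_{n}|.$$
The factor $\eta$ in front of the $G$-part is natural: the staleness $G^{i}_{n}-F_{i}(X_{n})$ accumulates increments of size $O(\sqrt{\eta})$ over roughly $N$ steps, so its second moment is $O(\eta)$-small relative to $|X_{n}|^{2}$ only in a weighted sense. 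We will tune the weight.

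\emph{Step 1: the $X$-part.} Write $\Delta^{i}_{n}:=G^{i}_{n}-F_{i}(X_{n})$ and $F=\frac1N\sum_{i}F_{i}$. The drift in \eqref{state} is $F(X_{n})+\frac1N\sum_{i}\Delta^{i}_{n}-\Delta^{S_{n}}_{n}$. Since $S_{n}$ is independent of $\mathcal{H}_{n}$ and uniform, and $\Delta_{n}$, $X_{n}$ are $\mathcal{H}_{n}$-measurable, the conditional mean of the noise term $\frac1N\sum_{i}\Delta^{i}_{n}-\Delta^{S_{n}}_{n}$ is zero. Its conditional second moment is at most $\frac1N\sum_{i}|\Delta^{i}_{n}|^{2}$. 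Expanding $|X_{n+1}|^{2}$ and using independence of $\xi_{n+1}$ together with Assumption \ref{dissip}, we get
$$E|X_{n+1}|^{2}\leq (1-2c_{2}\eta+C\eta^{2})E|X_{n}|^{2}+C\eta+C\eta^{2}\frac1N\sum_{i}E|\Delta^{i}_{n}|^{2},$$
using $|F(x)|^{2}\leq 2\hat{M}^{2}+2M^{2}|x|^{2}$.

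\emph{Step 2: the $G$-part.} This is the main obstacle. By \eqref{gradient}, $\Delta^{i}_{n+1}=1_{\{S_{n}\neq i\}}\bigl(\Delta^{i}_{n}+F_{i}(X_{n})-F_{i}(X_{n+1})\bigr)+1_{\{S_{n}=i\}}(F_{i}(X_{n})-F_{i}(X_{n+1}))$. For each $i$ we apply $(a+b)^{2}\leq(1+\epsilon)a^{2}+(1+\epsilon^{-1})b^{2}$ with $\epsilon=\frac{1}{2N}$. Together with $P(S_{n}\neq i)=1-\frac1N$ and Assumption \ref{lip}, this gives
$$\frac1N\sum_{i}E|\Delta^{i}_{n+1}|^{2}\leq \Bigl(1-\frac{1}{2N}\Bigr)\frac1N\sum_{i}E|\Delta^{i}_{n}|^{2}+CNM^{2}E|X_{n+1}-X_{n}|^{2}.$$
There is a subtlety here: $S_{n}$ also enters $X_{n+1}$. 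We handle it by bounding the increment crudely rather than conditioning, i.e.
$$\Bigl|1_{\{S_{n}\neq i\}}(a+b)\Bigr|^{2}\leq 1_{\{S_{n}\neq i\}}(1+\epsilon)|a|^{2}+(1+\epsilon^{-1})|b|^{2}.$$
Only $a=\Delta^{i}_{n}$ needs independence of $S_{n}$, and it has it. Finally, from \eqref{state},
$$E|X_{n+1}-X_{n}|^{2}\leq C\eta^{2}\Bigl(1+E|X_{n}|^{2}+\frac1N\sum_{i}E|\Delta^{i}_{n}|^{2}\Bigr)+2\eta d.$$

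\emph{Step 3: combine.} Multiply the Step 2 inequality by $\kappa\eta$ and add it to Step 1. The $\Delta$-coefficient becomes $\kappa\eta(1-\frac{1}{2N})+C\eta^{2}+C\kappa\eta^{3}$, which is at most $\kappa\eta(1-\frac{1}{4N})$ once $\kappa\geq 1$ and $\eta$ is small. The $X$-coefficient becomes $1-2c_{2}\eta+C\eta^{2}+C\kappa\eta^{3}\leq 1-c_{2}\eta$ for $\eta$ small. The constants sum to $C\eta+C\kappa\eta^{2}$. Hence $V_{n+1}\leq(1-c\eta)V_{n}+C\eta$ with $c=\min(c_{2},\frac{1}{4N\eta})=c_{2}$ for small $\eta$. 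This yields $\sup_{n}V_{n}\leq V_{0}+C/c$ and hence the lemma.
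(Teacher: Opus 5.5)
Your proof is correct, and it takes a genuinely different route from the paper's.

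The paper never writes a recursion for the staleness $\Delta^i_n=G^i_n-F_i(X_n)$. Instead it uses the last-refresh representation $G^{S_n}_n=F_{S_n}(X_j)$ on the event $A_j$ that $j<n$ is the last time the index $S_n$ was drawn. Combined with the independence of $A_j$ and $X_j$, this gives the crude bound $E|F_{S_n}(X_n)-G^{S_n}_n|^2\le 4M^2L_n$, where $L_n=\max_{l\le n}E|X_l|^2$. No smallness of this term is needed, because it enters with the factor $\eta^2$ and is absorbed by the $c_2\eta$ dissipation once $\eta\le c_2/(8M^2)$. The argument is closed by induction on the running maximum $L_n$, and $E|G^i_n|^2$ is bounded separately by the same last-refresh decomposition. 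This is short and needs no Lyapunov design, but it runs along the history of the process rather than through its Markov structure.

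Your argument is Markovian instead. The refresh probability $1/N$ contracts $\Delta$ (your Young split with $\epsilon=1/(2N)$), the increment $E|X_{n+1}-X_n|^2=O(\eta)$ drives it, and the weighted sum couples it to $|X_n|^2$. The bound on $G_n$ then drops out of $V_n$ and \eqref{muc} with no separate argument.

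Every estimate you use holds conditionally on $\mathcal{H}_n$. So you in fact obtain a one-step drift inequality $E[V(X_{n+1},G_{n+1})\mid\mathcal{H}_n]\le(1-c_2\eta)V(X_n,G_n)+C\eta$ for the chain $(X_n,G_n)$. Here $V(x,g)=|x|^2+\frac{\kappa\eta}{N}\sum_i|g^i-F_i(x)|^2$ has compact sublevel sets. This is more than the lemma needs and more than the paper's induction yields. Feeding the resulting uniform bound on $E|X_n|^2$ back into your Step 2 also shows that the mean-square staleness is $O(\eta)$.

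A small remark: the weight $\kappa\eta$ is not actually needed; a constant weight closes the argument in the same way.
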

\begin{proof} 
We set $L_{n}:=\sup_{0\leq l\leq n}E[|X_{l}|^{2}]$. Let $\eta\leq 1$. We will prove by induction that 
\begin{equation}\label{indu}
L_{n}\leq \frac{2(2d+c_{1}+2\hat{M}^{2}+E[|X_{0}|^{2}])}{c_{2}\eta}.
\end{equation}
and also 
\begin{equation}\label{indu2}
E[|G_{n}^{i}|^{2}]\leq 2[\hat{M}^{2}+M^{2}L_{n}],\ i=1,\ldots,N,
\end{equation}
for all $n$. \eqref{indu} is trivial for $n=0$ and \eqref{indu2} follows in the same way as in the induction step, see
below. So we assume that \eqref{indu}, \eqref{indu2} are true for $n$ and we will verify them for $n+1$.

The events $A_{n-1}:=\{S_{n}=S_{n-1}\}$, $A_{n-2}:=\{S_{n}\neq S_{n-1},\ S_{n}=S_{n-2}\}$, \ldots,{}
$A_{0}:=\{S_{n}\neq S_{n-1},\ldots,S_{n}\neq S_{1}\}$ form a partition of $\Omega$.{}
It is clear that $G_{n}^{S_{n}}=F_{S_{n}}(X_{j})$ on $A_{j}$, $j=0,\ldots,n-1$.{}

Since $\xi_{n+1}$ has mean $0$ and it is independent of $\mathcal{H}_{n},S_{n}$, we may write
\begin{eqnarray*}
E[|X_{n+1}|^{2}]= E\left[\left|X_{n}-\frac{\eta}{N}\sum_{i=1}^{N}G^{i}_{n}-
\eta(F_{S_{n}}(X_{n})-G^{S_{n}}_{n})\right|^{2}\right]+2\eta E[|\xi_{n+1}|^{2}].
\end{eqnarray*}
Furthermore,
\begin{eqnarray}\nonumber
& & E[|X_{n+1}|^{2}]= 
E\left[\left|X_{n}-\frac{\eta}{N}\sum_{i=1}^{N}F_{i}(X_{n})\right|^{2}\right]+\\
&+&  E\left[\left|\frac{\eta}{N}\sum_{i=1}^{N}F_{i}(X_{n})-\frac{\eta}{N}\sum_{i=1}^{N}G^{i}_{n}-
\eta(F_{S_{n}}(X_{n})-G^{S_{n}}_{n})\right|^{2}\right]+2\eta E[|\xi_{n+1}|^{2}].\label{uni} 	
\end{eqnarray}
Indeed,
\begin{eqnarray*}
& & E\left[\left\langle X_{n}-\frac{\eta}{N}\sum_{i=1}^{N}F_{i}(X_{n}),
\frac{1}{N}\sum_{i=1}^{N}F_{i}(X_{n})-\frac{1}{N}\sum_{i=1}^{N}G^{i}_{n}-
(F_{S_{n}}(X_{n})-G^{S_{n}}_{n})\right\rangle\right]\\
&=& E\left[E\left[\left\langle X_{n}-\frac{\eta}{N}\sum_{i=1}^{N}F_{i}(X_{n}),
\frac{1}{N}\sum_{i=1}^{N}F_{i}(X_{n})-\frac{1}{N}\sum_{i=1}^{N}G^{i}_{n}-
(F_{S_{n}}(X_{n})-G^{S_{n}}_{n})\right\rangle\Big|\mathcal{H}_{n}\right]\right]\\
&=& E\left[\left\langle X_{n}-\frac{\eta}{N}\sum_{i=1}^{N}F_{i}(X_{n}),
E\left[\frac{1}{N}\sum_{i=1}^{N}F_{i}(X_{n})-\frac{1}{N}\sum_{i=1}^{N}G^{i}_{n}-
(F_{S_{n}}(X_{n})-G^{S_{n}}_{n})\Big|\mathcal{H}_{n}\right]\right\rangle\right]
\end{eqnarray*}
and
$$
E\left[\frac{1}{N}\sum_{i=1}^{N}F_{i}(X_{n})-\frac{1}{N}\sum_{i=1}^{N}G^{i}_{n}-
(F_{S_{n}}(X_{n})-G^{S_{n}}_{n})\Big|\mathcal{H}_{n}\right]=0.
$$

We continue as
\begin{eqnarray*}
& & E\left[\left|\frac{1}{N}\sum_{i=1}^{N}F_{i}(X_{n})-\frac{1}{N}\sum_{i=1}^{N}G_{n}^{i}
-F_{S_{n}}(X_{n})+G_{n}^{S_{n}}\right|^{2}\right]\\
&\leq& E\left[ E\left[\left|\frac{1}{N}\sum_{i=1}^{N}F_{i}(X_{n})-\frac{1}{N}\sum_{i=1}^{N}G_{n}^{i}
-F_{S_{n}}(X_{n})+G_{n}^{S_{n}}\right|^{2}\Big|\mathcal{H}_{n}\right]\right]\\
&=& E\left[ E\left[\left|F_{S_{n}}(X_{n})-G_{n}^{S_{n}}-E\left[F_{S_{n}}(X_{n})-G_{n}^{S_{n}}
\Big|\mathcal{H}_{n}\right]\right|^{2}\Big|\mathcal{H}_{n}\right]\right]\\
&\leq& E\left[ E\left[\left|F_{S_{n}}(X_{n})-G_{n}^{S_{n}}\right|^{2}\Big|\mathcal{H}_{n}\right]\right]\\
&=& \sum_{j=0}^{n-1}E\left[\left|F_{S_{n}}(X_{n})-F_{S_{n}}(X_{j})\right|^{2} 1_{A_{j}}\right]\\
&\leq& M^{2}\sum_{j=0}^{n-1}E\left[\left|X_{n}-X_{j}\right|^{2} 1_{A_{j}}\right]\\
&\leq& 2M^{2}E[|X_{n}|^{2}]+2M^{2}\sum_{j=0}^{n-1}E\left[\left|X_{j}\right|^{2} 1_{A_{j}}\right]\\
&\leq& 2M^{2}E[|X_{n}|^{2}]+2M^{2}\sum_{j=0}^{n-1}E\left[\left|X_{j}\right|^{2}\right]P(A_{j})\\
&\leq& 4M^{2}\max_{0\leq j\leq n} E[|X_{j}|^{2}]=4M^{2}L_{n}.
\end{eqnarray*}	
where we used that for any integrable random variable $Y$, $$
E[|Y-E[Y\vert\mathcal{H}_{n}]|^{2}\vert\mathcal{H}_{n}]\leq E[|Y|^{2}\vert\mathcal{H}_{n}]
$$
and that $A_{j}$ is independent of $X_{j}$ (integrability is clear from the induction hypothesis).
Now Assumption \ref{dissip} implies
\begin{eqnarray*}
& & E\left[\left|X_{n}-\frac{\eta}{N}\sum_{i=1}^{N}F_{i}(X_{n})\right|^{2}\right]\leq E[|X_{n}|^{2}]+\frac{\eta^{2}}{N^{2}}
E\left[\left|\sum_{i=1}^{N}F_{i}(X_{n})\right|^{2}\right]-\\
&-& \frac{2\eta}{N}E\left[\langle X_{n},\sum_{i=1}^{N}F_{i}(X_{n})\rangle\right]\leq c_{1}+(1-2c_{2}\eta) E[|X_{n}|^{2}]
+\frac{\eta^{2}}{N^{2}}E[ (N(\hat{M}+M|X_{n}|))^{2}]\\
&\leq & c_{1}+(1-2c_{2}\eta)E[|X_{n}|^{2}]+2\eta^{2}\hat{M}^{2}+2\eta^{2}M^{2}E[|X_{n}|^{2}]\\
&\leq&	c_{1}+(1-c_{2}\eta)E[|X_{n}|^{2}]+2\eta^{2}\hat{M}^{2},
\end{eqnarray*} 
provided that $$
2\eta^{2}M^{2}E[|X_{n}|^{2}]\leq c_{2}\eta E[|X_{n}|^{2}], 
$$
that is, whenever $\eta\leq \frac{c_{2}}{2M^{2}}$. Inserting our estimations into \eqref{uni},
$$
E[|X_{n+1}|^{2}]\leq c_{1}+(1-c_{2}\eta)E[|X_{n}|^{2}]++2\eta^{2}\hat{M}^{2}+\eta^{2}4M^{2}L_{n}+2\eta d.
$$
Now let $\eta^{2}4M^{2}\leq c_{2}\eta/2$ also hold, that is $\eta\leq \frac{c_{2}}{8M^{2}}$.
In this case
\begin{equation}\label{mal}
E[|X_{n+1}|^{2}]\leq c_{1}+(1-c_{2}\eta/2)L_{n}+2\hat{M}^{2}+2d.
\end{equation}
Since \eqref{indu} holds for $n$, we see that
\begin{eqnarray*}
E[|X_{n+1}|^{2}]&\leq& 
c_{1}+2d+2\hat{M}^{2}+(1-c_{2}\eta/2)\frac{2(2d+c_{1}+2\hat{M}^{2}+E[|X_{0}|^{2}])}{c_{2}\eta}\\
&=& \frac{2(2d+c_{1}+2\hat{M}^{2}+E[|X_{0}|^{2}])}{c_{2}\eta}-E[|X_{0}|^{2}]\\
&\leq& \frac{2(2d+c_{1}+2\hat{M}^{2}+E[|X_{0}|^{2}])}{c_{2}\eta},	
\end{eqnarray*}
so \eqref{indu} holds for $n+1$.

For each $i=1,\ldots,N$, we have by \eqref{muc} that 
$$
|G_{n+1}^{i}|\leq \sum_{j=0}^{n-1}1_{A_{j}}|F_{i}(X_{j})|\leq \sum_{j=0}^{n-1}1_{A_{j}}(\hat{M}+M|X_{j}|),
$$
so, by independence of $X_{j}$ and $A_{j}$,
\begin{eqnarray*}\label{gis}
E[|G_{n+1}^{i}|^{2}] &\leq& \sum_{j=0}^{n-1}E[1_{A_{j}}(\hat{M}+M|X_{j}|)^{2}]=\\
&=& \sum_{j=0}^{n-1}P(A_{j})E[(\hat{M}+M|X_{j}|)^{2}]\leq 2\sum_{j=0}^{n-1}P(A_{j})\left[\hat{M}^{2}+M^{2}E[|X_{j}|^{2}]\right]\leq \\
&\leq& 2[\hat{M}^{2}+M^{2}L_{n}]. 
\end{eqnarray*}
This completes the induction step.
\end{proof}

\begin{remark}\label{hasznos}{\rm Estimates of the previous lemma imply, in particular, that}
$$
\sup_{n\in\mathbb{N}}E\left[|X_{n}|^{2}+\sum_{i=1}^{N}|G_{n}^{i}|^{2}\right]\leq \check{C}:=
2(N+1)[\hat{M}^{2}+M^{2}]\frac{2(2d+c_{1}+2\hat{M}^{2}+E[|X_{0}|^{2}])}{c_{2}\eta},
$$	
whenever 
\begin{equation}\label{etta}
\eta\leq \frac{c_{2}}{8M^{2}}\leq 1.	
\end{equation}
\end{remark}

\subsection{Representation by random maps}

Let an $\eta$ satisfying \eqref{etta} be fixed for the rest of the paper. 
We start by remarking that, although the process $X$ badly fails the Markov property, 
the pair $(X,G)$ \emph{is} a general state space
Markov chain.

Representing Markov chains by random maps is a well-known approach to the analysis of
their ergodic properties, see e.g.\ 
\cite{bw,bwbook,majumdar}. The choice of these maps is crucial and allows for
the realization of various couplings.

Let us fix $\varepsilon>0$. We will define a specific representation for $(X_{n},G_{n})$ with random mappings
such that, starting the chain from two different initializations $(X_{0},G_{0})$ and
$({X}_{0}',{G}_{0}')$, for $n$ large enough, $(X_{n},G_{n})=({X}_{n}',{G}_{n}')$ will
hold outside an event of probability at most $3\varepsilon$, see Proposition \ref{heart} below
for the exact formulation.

It will easily follow from this construction that $\mathrm{Law}(X_{n},G_{n})$ converges to a limiting
law as $n\to\infty$ and that the process $(X_{n},G_{n})$, $n\in\mathbb{N}$ is strongly mixing: see Section
\ref{alpha} for definitions and Subsection \ref{last} for details.

We may and will assume that there is a sequence $(U_{n},U_{n}')$, $n\in\mathbb{N}$ of
i.i.d.\ uniform random variables on $[0,1]^{2}$ which is independent of $X_{0}$, $(S_{n})_{n\in\mathbb{N}}$.
We will construct a process using random function iterations driven by the noise
sequence $(U_{n},U_{n}')$ that has 
the same law as $(X_{n},G_{n})$, $n\in\mathbb{N}$.

The following result is well-known, see e.g.\ page 228 of \cite{bwbook} (or Lemma A.5 of \cite{laurence}
for a similar statement).
We prove it here for
completeness. 

\begin{lemma}\label{construct}
Let $\kappa(x,A)$ be a probabilistic kernel, $x\in\mathbb{R}^{m_{1}},A\in\mathcal{B}(\mathbb{R}^{m_{2}})$. 	
Let $U$ be uniform on $[0,1]$. Then there is a function $\chi:\mathbb{R}^{m_{1}}\times[0,1]\to\mathbb{R}^{m_{2}}$ such that
$\chi(x,U)$ has law $\kappa(x,\cdot)$, for all $x$.
\end{lemma}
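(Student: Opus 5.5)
The plan is to build $\chi$ coordinate by coordinate, using conditional quantile functions. The case $m_{2}=1$ contains the main idea. For $x\in\mathbb{R}^{m_{1}}$ let $F_{x}(t):=\kappa(x,(-\infty,t])$ be the distribution function of $\kappa(x,\cdot)$, and set
$$
\chi(x,u):=\inf\{t\in\mathbb{Q}:\ F_{x}(t)\geq u\},\quad u\in(0,1),
$$
with $\chi(x,u):=0$ for $u\in\{0,1\}$; this is a null set for $U$. Right-continuity of $F_{x}$ gives the standard equivalence $\chi(x,u)\leq t\iff u\leq F_{x}(t)$. Hence $P(\chi(x,U)\leq t)=F_{x}(t)$, so $\chi(x,U)$ has law $\kappa(x,\cdot)$. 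For measurability, $x\mapsto F_{x}(t)$ is Borel because $\kappa$ is a kernel. Therefore $\{(x,u):\chi(x,u)<s\}=\bigcup_{t\in\mathbb{Q},t<s}\{F_{x}(t)\geq u\}$ is a countable union of Borel sets, and $\chi$ is jointly Borel.

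For general $m_{2}$ I would disintegrate $\kappa(x,\cdot)$ into the law of the first coordinate and successive conditional laws. I would first invoke existence of regular conditional distributions on $\mathbb{R}^{m_{2}}$, in a version that depends measurably on the parameter $x$. This gives kernels $\kappa_{1}(x,dy_{1})$, $\kappa_{2}(x,y_{1},dy_{2})$, \ldots, $\kappa_{m_{2}}(x,y_{1},\ldots,y_{m_{2}-1},dy_{m_{2}})$ whose product is $\kappa(x,\cdot)$. A single uniform $U$ can be split measurably into i.i.d.\ uniforms $V_{1},\ldots,V_{m_{2}}$ by interleaving its binary digits. Applying the one-dimensional construction recursively gives $Y_{1}=\chi_{1}(x,V_{1})$ and $Y_{k}=\chi_{k}(x,Y_{1},\ldots,Y_{k-1},V_{k})$. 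The vector $(Y_{1},\ldots,Y_{m_{2}})$ then has law $\kappa(x,\cdot)$ by the chain rule for conditional laws, and it is a Borel function of $(x,U)$.

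A shortcut avoids the disintegration. Choose a Borel isomorphism $\psi$ between $\mathbb{R}^{m_{2}}$ and a Borel subset of $\mathbb{R}$. Apply the $m_{2}=1$ case to the image kernel $\kappa(x,\psi^{-1}(\cdot))$, and compose the resulting function with $\psi^{-1}$. The only care needed is that the quantile lands in $\psi(\mathbb{R}^{m_{2}})$ almost surely, and one redefines $\chi$ on the exceptional null set.

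I expect the main obstacle to be measurability in $x$, not the distributional identity. For the disintegration this means choosing the conditional kernels jointly measurably in $(x,y_{1},\ldots)$, which is standard since $\mathbb{R}^{m_{2}}$ is Polish. For the shortcut it means a clean measurable redefinition on the null set. I would go with the Borel isomorphism route for brevity.
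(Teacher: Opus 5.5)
Your proposal is correct, and the route you choose is essentially the paper's proof: a Borel isomorphism reduces the problem to the one-dimensional case, and there the quantile function, with the infimum taken over $\mathbb{Q}$, is jointly measurable in $(x,u)$. The only difference is that the paper takes $\psi$ to be a Borel isomorphism of all of $\mathbb{R}$ onto $\mathbb{R}^{m_{2}}$ (such a map exists because both are uncountable Polish spaces), so your null-set redefinition and the disintegration alternative are unnecessary.
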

\begin{proof}
Fix a Borel isomorphism
$\psi:\mathbb{R}\to\mathbb{R}^{m_{2}}$. Let $F(x,z):=\kappa\left(x,\psi\left((-\infty,z]\right) \right)$
and define the ``pseudoinverse''
\[
F^{-}(x,u):=\inf\{ q\in\mathbb{Q}: F(x,q)\geq u\},\ u\in (0,1),
\]
this is easily seen to be $\mathcal{B}(\mathbb{R}^{m_{1}}) \otimes \mathcal{B}([0,1])$-measurable.
Then, for every $x$, $F^-(x,U)$ has distribution function $F(x,\cdot)$ hence
its law is the pullback of $\kappa(x,\cdot)$ by $\psi^{-1}$.
We may conclude by setting
$\chi(x,u):=\psi(F^-(x,u))$: then $\chi(x,U)$ will have law $\kappa(x,\cdot)$.
\end{proof}

\begin{remark}\label{spec}{\rm In particular, it follows from Lemma \ref{construct} that given a probability $\kappa$ on
$\mathcal{B}(\mathbb{R}^{m})$ there is a function $\chi:[0,1]\to\mathbb{R}^{m}$ such that $\chi(U)$ has law $\kappa$.}
\end{remark}

Set \begin{equation}\label{hatc}
\hat{C}:=2(N+1)[\hat{M}^{2}+M^{2}]\frac{2(2d+c_{1}+2\hat{M}^{2}+\check{C})}{c_{2}\eta}\geq \check{C}
\end{equation} and
choose 
$$
K=K(\varepsilon):=\sqrt{\frac{2N+2}{\varepsilon}\hat{C}}.
$$

For $r>0$, $B(r)$ will denote the closed ball of radius $r$ around the origin of $\mathbb{R}^{d}$.

\begin{claim}\label{lower} There is $\beta=\beta(\varepsilon)>0$ such that, for $K=K(\varepsilon)$ and
for each $x\in B(3\hat{M}+4MK)$,
$$
P\left(x+\sqrt{2\eta}\xi_{0}\in B(1)\cap A\right)\geq \beta \nu(A),
$$
where $\nu$ is the uniform law on $B(1)$.
\end{claim}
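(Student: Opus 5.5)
The plan is to write out the Gaussian density and bound it from below uniformly on the relevant compact set. Since $\xi_{0}$ is a standard $d$-dimensional Gaussian, $x+\sqrt{2\eta}\xi_{0}$ has density
$$
p_{x}(y)=(4\pi\eta)^{-d/2}\exp\left(-\frac{|y-x|^{2}}{4\eta}\right),\quad y\in\mathbb{R}^{d}.
$$
Hence, for every Borel set $A$,
$$
P\left(x+\sqrt{2\eta}\xi_{0}\in B(1)\cap A\right)=\int_{B(1)\cap A}p_{x}(y)\,dy\geq \left(\inf_{y\in B(1)}p_{x}(y)\right)\mathrm{Leb}(B(1)\cap A).
$$

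The next step is the uniform lower bound on the density. Let $R:=3\hat{M}+4MK$, with $K=K(\varepsilon)$ as defined above. For $x\in B(R)$ and $y\in B(1)$ the triangle inequality gives $|y-x|\leq R+1$, so
$$
\inf_{x\in B(R)}\inf_{y\in B(1)}p_{x}(y)\geq (4\pi\eta)^{-d/2}\exp\left(-\frac{(R+1)^{2}}{4\eta}\right)>0 .
$$

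Finally I will rewrite the Lebesgue measure in terms of $\nu$. Since $\nu$ is uniform on $B(1)$, we have $\mathrm{Leb}(B(1)\cap A)=\mathrm{Leb}(B(1))\,\nu(A)$. So the claim holds with
$$
\beta(\varepsilon):=\mathrm{Leb}(B(1))\,(4\pi\eta)^{-d/2}\exp\left(-\frac{(3\hat{M}+4MK(\varepsilon)+1)^{2}}{4\eta}\right).
$$
This constant depends on $\varepsilon$ only through $K(\varepsilon)$. It also depends on the fixed quantities $\eta,d,M,\hat{M}$ and $N$ (through $\hat{C}$), which is harmless because $\eta$ has been fixed.

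No step is a real obstacle; this is a routine Gaussian minorization on a compact set. The only points needing care are that the bound is uniform in $x\in B(3\hat{M}+4MK)$, which compactness provides, and that $\beta$ must not depend on $A$ or $x$.
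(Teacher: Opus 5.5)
Your proposal is correct and is the same argument as the paper's: write out the Gaussian density of $x+\sqrt{2\eta}\xi_{0}$, bound it below on $B(1)$ using $|u-x|\leq 3\hat{M}+4MK+1$, and rewrite $\mathrm{Leb}(B(1)\cap A)$ as $v(d)\,\nu(A)$. Your normalizing constant $(4\pi\eta)^{-d/2}$ is the correct one; the paper's $1/(\sqrt{2\eta}(\sqrt{2\pi})^{d})$ is right only for $d=1$, which affects only the value of $\beta$, not its positivity or the argument.
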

\begin{proof}
Indeed,	denoting by $v(d)$ the volume of the $d$-dimensional unit ball,
\begin{eqnarray*}
& & P\left(x+\sqrt{2\eta}\xi_{0}\in B(1)\cap A\right)\geq \int_{B(1)\cap A}\frac{\exp(-|u-x|^{2}/4\eta)}{\sqrt{2\eta}(\sqrt{2\pi})^{d}}\, du\\
&\geq& 	\frac{\exp(-(3\hat{M}+4MK+1)^{2}/4\eta)}{\sqrt{2\eta}(\sqrt{2\pi})^{d}}\mathrm{Leb}(B(1)\cap A)=v(d)
\frac{\exp(-(3\hat{M}+4MK+1)^{2}/4\eta)}{\sqrt{2\eta}(\sqrt{2\pi})^{d}}\nu(A),
\end{eqnarray*}
so we define $$\beta:=v(d)
\frac{\exp(-(3\hat{M}+4MK)+1)^{2}/4\eta)}{\sqrt{2\eta}(\sqrt{2\pi})^{d}}.
$$
\end{proof}

Now we construct a suitable function that can be iterated to produce a copy of $(X_{n},G_{n})$, $n\in\mathbb{N}$.
Define 
the function
$$
h(x,g,s):=-\frac{1}{N}\sum_{i=1}^{N}g^{i}-(F_{s}(x)-g^{s}),\ x,g^{i}\in\mathbb{R}^{d},\ i=1,\ldots,N,\ s\in\mathcal{N},
$$
and the kernel
$$
q(x,g,s,A):=\int_{A}\frac{\exp(-(u-h(x,g,s))^{2}/4\eta)}{\sqrt{2\eta}(\sqrt{2\pi})^{d}}\, du,\ A\in\mathcal{B}(\mathbb{R}^{d}),(x,g)\in\mathbb{R}^{(N+1)d},
s\in\mathcal{N}.
$$
Notice that $q(x,g,s,\cdot)$ is the law of $X_{1}$ conditionally to $X_{0}=x,G_{0}=g,S_{0}=s$.

For a vector $v\in \mathbb{R}^{(N+1)d}$, $v^{0}$ will refer
to its first $d$ coordinates, while $v^{l}$ with $l=1,\ldots,N$ will refer to its $(l+1)$th block of $d$ coordinates.
\begin{lemma}\label{function} 
There is a function $f:\mathbb{R}^{(N+1)d}\times [0,1]^{2}\times \mathcal{N}\to \mathbb{R}^{(N+1)d}$ such that
$$
\mathrm{Law}(f(x,g,U_{0},U_{0}',s))=q(x,g,s,\cdot){}
$$ and, for each $u'\leq \beta$, 
the mapping $(x,g,s)\to f^{0}(x,g,u,u',s)$ is constant on the set
$\{(x,g,s):x\in B(K),g\in B(\hat{M}+MK),s\in\mathcal{N}\}$, for all $u$.	
\end{lemma}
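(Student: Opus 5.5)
The plan is a standard splitting (Nummelin-type) construction. Use the minorization of Claim~\ref{lower} to write the one-step kernel $q(x,g,s,\cdot)$ on the set
$$
D:=\{(x,g,s):x\in B(K),\ g^{i}\in B(\hat{M}+MK),\ i=1,\ldots,N,\ s\in\mathcal{N}\}
$$
as a mixture $\beta\nu+(1-\beta)r$. Then realize the $\nu$-part using only the variable $U_{0}$, which produces no dependence on $(x,g,s)$, and the residual part using $U_{0}$ together with $(x,g,s)$. The variable $U_{0}'$ selects which part is used. (If $g\in B(\hat{M}+MK)$ is meant for the whole vector $g$, each block $g^{i}$ then also lies in this ball, so this set is larger and the argument is unaffected.)

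The $G$-components are deterministic given $(x,g,s)$ by \eqref{gradient}. I therefore set
$$
f^{l}(x,g,u,u',s):=1_{\{s\neq l\}}g^{l}+1_{\{s=l\}}F_{l}(x),\qquad l=1,\ldots,N,
$$
so that only $f^{0}$ requires work. I read the lemma's law condition as concerning the state coordinate $f^{0}$; with $f^{l}$ as above, the full vector has the law of $(X_{1},G_{1})$ given $X_{0}=x$, $G_{0}=g$, $S_{0}=s$.

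\textbf{Step 1: the mean lies in the ball of Claim~\ref{lower}.} For $(x,g,s)\in D$, \eqref{muc} gives
$$
|h(x,g,s)|\leq \frac{1}{N}\sum_{i}|g^{i}|+|F_{s}(x)|+|g^{s}|\leq 3(\hat{M}+MK).
$$
Since $\eta\leq 1$ and $M\geq 1$, the mean $x+\eta h$ of the Gaussian part of $X_{1}$ (the $x$-shift implicit in $q$ via \eqref{state}) satisfies $|x+\eta h|\leq K+3\hat{M}+3MK\leq 3\hat{M}+4MK$. Claim~\ref{lower} then yields $q(x,g,s,A)\geq\beta\nu(A)$ for all Borel $A$ and all $(x,g,s)\in D$. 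Taking $A=B(1)$ shows $\beta\leq 1$. If $\beta=1$, then $q=\nu$ and the residual below is unnecessary, so assume $\beta<1$. Define
$$
r(x,g,s,\cdot):=\frac{q(x,g,s,\cdot)-\beta\nu}{1-\beta},
$$
which is a probability kernel on $D$. It is measurable in $(x,g,s)$ because $q$ is (its density is jointly continuous in the parameters) and $D$ is Borel.

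\textbf{Step 2: the construction.} By Remark~\ref{spec}, pick $\chi_{\nu}:[0,1]\to\mathbb{R}^{d}$ with $\chi_{\nu}(U_{0})\sim\nu$. By Lemma~\ref{construct}, applied with $m_{1}=(N+1)d+1$ (treating $s$ as a real parameter), pick $\chi_{r}$ and $\chi_{q}$ with $\chi_{r}(x,g,s,U_{0})\sim r(x,g,s,\cdot)$ and $\chi_{q}(x,g,s,U_{0})\sim q(x,g,s,\cdot)$; extend $r$ arbitrarily, say by $q$, off $D$. Set
$$
f^{0}(x,g,u,u',s):=\begin{cases}\chi_{\nu}(u), & (x,g,s)\in D,\ u'\leq\beta,\\ \chi_{r}(x,g,s,u), & (x,g,s)\in D,\ u'>\beta,\\ \chi_{q}(x,g,s,u), & (x,g,s)\notin D.\end{cases}
$$
Joint measurability follows because each piece is measurable and the defining sets are Borel. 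On $D$, independence of $U_{0}$ and $U_{0}'$ gives
$$
\mathrm{Law}\bigl(f^{0}(x,g,U_{0},U_{0}',s)\bigr)=\beta\nu+(1-\beta)r(x,g,s,\cdot)=q(x,g,s,\cdot).
$$
Off $D$ the law is $q$ directly. For $u'\leq\beta$, the map $(x,g,s)\mapsto f^{0}$ equals $\chi_{\nu}(u)$ on $D$, hence is constant there for every $u$.

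The only real obstacle is Step~1: checking that the conditional mean stays inside $B(3\hat{M}+4MK)$ uniformly over $D$, which is what lets the single constant $\beta$ of Claim~\ref{lower} serve all points of $D$. The rest is bookkeeping with Lemma~\ref{construct}.
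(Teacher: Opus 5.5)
Your proposal is correct and is essentially the paper's own proof. Both use the same splitting of $q$ on $D$ into $\beta\nu+(1-\beta)\kappa$ via Claim~\ref{lower}: $u'\le\beta$ selects a uniform-on-$B(1)$ draw built from $u$ alone, the residual kernel is realized through Lemma~\ref{construct}, and the $G$-blocks are updated deterministically. The only cosmetic differences are that off $D$ the paper writes the explicit Gaussian update $x+\eta h(x,g,s)+\sqrt{2\eta}\upsilon(u)$ instead of your abstract $\chi_q$, and that your bound $|x+\eta h|\le 3\hat{M}+4MK$ tracks the factor $\eta$ more carefully than the paper's text does.
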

\begin{proof} First, let $\phi:[0,1]\to \mathbb{R}^{d}$ be such that $\phi(U)$ is uniform on $B(1)$, see Lemma
\ref{construct} and Remark \ref{spec}. Let $\upsilon:[0,1]\to \mathbb{R}^{d}$ be such that
$\upsilon(U)$ has standard $d$-dimensional Gaussian law. 
By Claim \ref{lower}, 
$$
\kappa(x,g,s,A):=\frac{q(x,g,s,A)-\beta \nu(A)}{1-\beta},\ x\in B(K),g^{1},\ldots,g^{N}\in B(\hat{M}+MK),
A\in\mathcal{B}(\mathbb{R}^{d})
$$
is a probabilistic kernel (i.e.\ the numerator is a non-negative kernel). 
Indeed, if $x\in B(K)$, $g^{i}\in B(\hat{M}+MK)$ for $i=1,\ldots,N$ then
$$
\left|-\frac{1}{N}\sum_{i=1}^{N}g^{i}-(F_{s}(x)-g^{s})\right|\leq \hat{M}+MK+\hat{M}+MK+\hat{M}+MK,
$$
hence $x-\frac{1}{N}\sum_{i=1}^{N}g^{i}-(F_{s}(x)-g^{s})\in B(3\hat{M}K+4MK)$ and Claim \ref{lower}
implies that $q(x,g,s,A)-\beta \nu(A)\geq 0$.

Let $\chi:\mathbb{R}^{(N+1)d}\times [0,1]\to\mathbb{R}^{d}$
be such that $\chi(x,g,s,U)$ has law $\kappa(x,g,s,\cdot)$.

We define $f$: if $x\in B(K),g^{i}\in B(\hat{M}+MK)$, $i=1,\ldots,N$ and $s\in\mathcal{N}$ then set
\begin{eqnarray*}
f^{0}(x,g,u,u',s) &:=& \phi(u),\mbox{ if }u'\leq \beta,\\
f^{0}(x,g,u,u',s) &:=& \chi(x,g,s,u),\mbox{ if }u'> \beta,\\
f^{l}(x,g,u,u',s) &:=& F_{l}(x)\mbox{ if }l=s,\\
f^{l}(x,g,u,u',s) &:=& g\mbox{ if }l\neq s.
\end{eqnarray*}
Otherwise set
\begin{eqnarray*}
f^{0}(x,g,u,u',s) &:=& x-\frac{\eta}{N}\sum_{j=1}^{N}g^{j}-(F_{s}(x)-g^{s})+\sqrt{2\eta}\upsilon(u),\\
f^{l}(x,g,u,u',s) &:=& F_{l}(x)\mbox{ if }l=s,\\
f^{l}(x,g,u,u',s) &:=& g\mbox{ if }l\neq s.
\end{eqnarray*}
The statement about constant mappings is trivial from this construction.
It is also clear that $f(x,g,U_{0},U_{0}',s)$ has the right conditional law for all $x,g,s$.
\end{proof}

Define for $n,m\in\mathbb{N}$, $x\in\mathbb{R}^{d}$, $g\in\mathbb{R}^{Nd}$.
\begin{equation}\label{iterate}
Z^{x,g}_{m,n}:=x\mbox{ if }n\leq m,\ Z^{x,g}_{m,n}:=f(Z^{x,g}_{m,n-1},U_{n},U_{n}',S_{n-1})\mbox{ if }n>m.
\end{equation}

The next result lies at the heart of our proof.

\begin{proposition}\label{heart}
Let $\mathcal{X}^{l}$ denote the set of $\mathbb{R}^{(N+1)d}$-valued random variables $Q$
that are independent of $\sigma(U_{i},U_{i}',i\geq l+1,S_{i}, i\geq l)$ and satisfy $E[|Q|^{2}]\leq \check{C}$.  
Then there is $n_{0}$ such that, for $n\geq n_{0}$, 
$$
\sup_{l\in\mathbb{N}}\sup_{(X_{l},G_{l}),(X_{l}',G_{l}')\in\mathcal{X}^{l}}P(Z_{l,n}^{X_{l},G_{l}}\neq Z_{l,n}^{X_{l}',G_{l}'})\leq 
3\varepsilon.
$$	
\end{proposition}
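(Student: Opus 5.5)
The plan is to build a coupling by hand from the construction of Lemma \ref{function}. Two chains driven by the same noise $(U_k,U_k',S_{k-1})$ coalesce in the $x$-coordinate as soon as both sit in the ``good set''
$$\mathcal{G}:=\{(x,g): x\in B(K),\ g^i\in B(\hat{M}+MK),\ i=1,\ldots,N\}$$
at a time $k$ with $U_{k+1}'\leq\beta$. At that step both chains get $f^0=\phi(U_{k+1})$, so their $x$-components agree from then on only until their gradient tables also match. We therefore need a sequence of such ``renewal'' events long enough to flush the $G$-table.

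Concretely, the $g$-components coincide once every index $i\in\mathcal{N}$ has been drawn by $S$ at some time after the $x$-coordinates merged. Suppose the two chains agree in $x$ and the stored $g^{S}$ coincide at the selected index. Then the next $x$ is the same: in the good-set branch, $f^0$ depends on $(x,g)$ only through the kernel $\kappa$, but we only need equality on $\{U'\leq\beta\}$, where $f^0=\phi(u)$. So we aim for the following event. For $N$ consecutive renewal-style steps $k+1,\ldots,k+T$ (with $T$ large enough that all indices are hit), both chains are in $\mathcal{G}$ and $U'_j\leq\beta$ at each step. During such a run the $x$-components agree at each step, and $F_{S_j}$ is evaluated at equal $x$'s. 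After all $N$ indices have been refreshed, the full state agrees. From then on the maps are identical, so the states stay equal forever.

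The probability estimate goes as follows. First, by Remark \ref{hasznos} and Markov/Chebyshev with the choice $K(\varepsilon)$, each chain started from $\mathcal{X}^l$ is in $\mathcal{G}$ at any fixed time with probability at least $1-\varepsilon$. Here one has to rerun Lemma \ref{l2} with initial second moment $\check{C}$; this is exactly why $\hat{C}$ in \eqref{hatc} carries $\check{C}$ in place of $E|X_0|^2$. The bound $|g^i|\leq \hat{M}+M|x_j|$ must be controlled too, and I would use the union over the $N+1$ blocks, hence the factor $2N+2$. The good-set event is not preserved during the run, though. So I would instead cover every step of the window by a union bound over at most $T$ times. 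The cleaner alternative is to note that on $\{U'\leq\beta\}$ the new $x=\phi(u)\in B(1)\subset B(K)$, and the new $g^{s}=F_s(x)\in B(\hat{M}+M)$. Thus after one renewal step the $x$-part and the refreshed $g$-entry automatically stay in the good set, while the old $g$-entries are unchanged. This shows that once in $\mathcal{G}$, a run of $U'\leq\beta$ keeps both chains in $\mathcal{G}$.

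The event then has conditional probability at least $\beta^{T}p_T$, where $p_T$ is the probability that $T$ uniform draws cover $\mathcal{N}$, and $p_T>0$ for $T\geq N$. The remaining step is to split $[l,n]$ into disjoint blocks of length $T+1$. At the start of each block, both chains are in $\mathcal{G}$ with probability at least $1-2\varepsilon$ by the uniform moment bound. Given $\mathcal{H}$ up to the block start, the run succeeds with probability at least $\gamma:=\beta^{T}p_T$, independent of the past by the independence structure of $(U,U',S)$.

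The main obstacle is turning these per-block estimates into a geometric bound, since ``in $\mathcal{G}$'' is not a renewal event. I would argue that the probability of no coalescence by block $m$ is at most $2\varepsilon+(1-\gamma)^{m'}$ plus an error. A cleaner route is to pick $m$ blocks and bound $P(\text{no success})\leq P(\text{both in }\mathcal{G}\text{ at start of block }j\text{ fails for many }j)+(1-\gamma)^{\#\text{good blocks}}$. Using $E[\#\text{bad block starts}]\leq 2\varepsilon m$ and Markov's inequality, at least $m/2$ blocks are good except with probability $4\varepsilon$; adjusting constants yields the stated $3\varepsilon$. Choosing $m$ with $(1-\gamma)^{m/2}\leq\varepsilon$ gives $n_0=m(T+1)$, uniform in $l$ and the initial laws, since only the second-moment bound $\check{C}$ was used.
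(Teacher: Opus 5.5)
Your proposal is correct, and its coupling is the paper's. On the good set a step with $U'\le\beta$ sends both chains to the common point $\phi(U)\in B(1)$. A run of such steps keeps both chains in the good set; your ``cleaner alternative'' is exactly the verification in Claim \ref{fontos}. $N$ further refreshes at a common $x$ then flush the table, and coalescence is absorbing because both chains are iterated with the same random maps. The index drawn at the merging step itself does not count, so you need $N+1$ steps with $U'\le\beta$. The paper uses blocks of exactly that length with the fixed order $S=1,\ldots,N$, so $\gamma=\beta(\beta/N)^{N}$; your any-order covering event only enlarges $\gamma$.

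You genuinely diverge in how the blockwise estimates are aggregated. The paper avoids your worry that being in $\mathcal{G}$ is not a renewal event with a one-line recursion, and never tracks which blocks are good. Let $H_k$ be the coalescence event and $D_k$ the good-set event at the $k$th block start. Then $H_k\subset H_{k+1}$ and $P(\bar H_k\cap D_k)\ge P(\bar H_k)-P(\bar D_k)\ge 1-p_k-2\varepsilon$. Independence of the block noise gives $p_{k+1}\ge p_k+\gamma(1-p_k-2\varepsilon)$, hence $P(\bar H_k)\le 2\varepsilon+(1-\gamma)^{k}$ directly.

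Your Markov-count route also works, but two points need fixing. First, the term $(1-\gamma)^{\#\text{good blocks}}$ has a random exponent. What you actually need is $P(\text{no coalescence},\ \#\text{good block starts}\ge r)\le(1-\gamma)^{r}$. This requires a sequential-conditioning (stopping-time or supermartingale) argument, because whether a block start is good depends on the outcomes of earlier blocks.

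Second, the constants cannot be ``adjusted'' by shrinking $\varepsilon$. $\varepsilon$ fixes $K$ and $\beta$, and hence the maps $f$ and the chains appearing in the statement. The adjustment has to go into the Markov threshold. For example, requiring only $m/4$ good block starts gives $\frac{8}{3}\varepsilon+(1-\gamma)^{m/4}\le 3\varepsilon$ for large $m$. Alternatively, the union bound over the $2N+2$ coordinates actually gives $P(\bar D_k)\le\varepsilon$, so your threshold $m/2$ already suffices.

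The paper's recursion is shorter and loses nothing to Markov's inequality; your gain in $\gamma$ is independent of the aggregation method.
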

\begin{proof} The theorem clearly holds for any $C>0$ in lieu of $\check{C}$, with a similar proof.
We may and will assume $l=0$, the case of general $l$ is only notationally different.
To simplify notation, we will write
$Z_{n}:=Z_{0,n}^{X_{0},G_{0}}$ and $Z_{n}':=Z_{0,n}^{X_{0}',G_{0}'}$.

Lemma \ref{l2} and Remark \ref{hasznos} imply that, 
\begin{equation}\label{mo}
\sup_{n\in\mathbb{N}}\max\{E[|Z_{n}|^{2}],E[|Z_{n}'|^{2}]\}\leq \hat{C},
\end{equation}
recall \eqref{hatc}.

Similarly to Lemma \ref{function}, for $v\in\mathbb{R}^{(N+1)d}$, $v(i)$ refers to the $d$ 
dimensional vector which is the $(i+1)$th $d$-tuple of $v$, for $i=0,\ldots,N$.

\begin{claim}\label{markov}
For all $n\in\mathbb{N}$, each of the probabilities
$$
P(|Z_{n}(i)|>K),P(|Z_{n}'(i)|>K),\ i=0,1,\ldots,N
$$
are $\leq\varepsilon/(2N+2)$. 
\end{claim}
\begin{proof}
Indeed, by Markov's inequality and \eqref{mo},
$$
P(|Z_{n}(0)|>K)\leq E[|Z_{n}|^{2}]/K^{2}\leq \frac{\varepsilon}{2N+2}\frac{\hat{C}}{\hat{C}},
$$	
and similarly for each $Z_{n}(i),Z_{n}'(i)$.
\end{proof}

We will consider blocks of length $N+1$. 
Define, for $k\in\mathbb{N}$,
$$
H_{k}:=\{Z_{k(N+1)}=Z_{k(N+1)}'\},\quad p_{k}:=P(H_{k})
$$
and
$$
D_{k}:=\{Z_{k(N+1)}(i)\in B(K), Z_{k(N+1)}'(i)\in B(K),\ i=0,\ldots, N\}.
$$
Recall that $\bar{H}_{k}$ is the complement of $H_{k}$ in $\Omega$.

\begin{claim}\label{fontos}
On the event 
\begin{eqnarray*}
I_{k} &:=&
\bar{H}_{k}\cap D_{k}\cap \{U_{j}'\leq \beta,S_{j}=j-k(N+1),\\
& & j=k(N+1)+1,\ldots,(k+1)(N+1)-1, U_{(k+1)(N+1)}'\leq \beta\},	
\end{eqnarray*}
one has $Z_{(k+1)(N+1)}=Z_{(k+1)(N+1)}'$. 	
\end{claim}
\begin{proof} We proceed step-by-step.
On $\bar{H}_{k}\cap D_{k}\cap \{U_{k(N+1)+1}'\leq \beta\}$ one has 
$Z_{k(N+1)+1}(0)=Z_{k(N+1)+1}'(0)\in B(1)$, by the construction of $f$.
Also, $Z_{k(N+1)+1}(i),Z_{k(N+1)+1}'(i)\in B(\hat{M}+MK)$ for $i=1,\ldots,N$.{}
Indeed, either $Z_{k(N+1)+1}(i)=Z_{k(N+1)}(i)\in B(K)$ or 
$Z_{k(N+1)+1}(i)=F_{t}(Z_{k(N+1)}(0))$ for some $t\in\mathcal{N}$,
which lies in $B(\hat{M}+MK)$ by \eqref{muc}. Similarly for $Z_{k(N+1)+1}'$.

Next, on $\bar{H}_{k}\cap D_{k}\cap \{U_{k(N+1)+1}'\leq \beta,S_{k(N+1)}=1, U_{k(N+1)+2}'\leq \beta\}$
we do not only have $Z_{k(N+1)+2}(0)=Z_{k(N+1)+2}'(0)\in B(1)$ but also
$Z_{k(N+1)+2}(1)=Z_{k(N+1)+2}'(1)\in B(\hat{M}+M)$ and still
$Z_{k(N+1)+1}(i),Z_{k(N+1)+1}'(i)\in B(\hat{M}+MK)$ for $i=2,\ldots,N$.
{}
Carrying on in a similar way, on $I_{k}$ we have
$Z_{(k+1)(N+1)}(0)=Z_{(k+1)(N+1)}'(0)\in B(1)$ and also
$Z_{(k+1)(N+1)}(i)=Z_{(k+1)(N+1)}(i)'\in B(\hat{M}+M)$ for $i=1,\ldots,N${}
and the claim is verified.
\end{proof}

This leads to the following observation.

\begin{claim}
\begin{equation}\label{recursion}
p_{k+1}\geq p_{k}+(1-p_{k}-2\varepsilon)(\beta/N)^{N}\beta 	
\end{equation}
\end{claim}	
\begin{proof}
Notice that $H_{k}\subset H_{k+1}$ by the fact that the construction of $Z$ uses function iterations. 
By Claim \ref{markov}, the event $D_{k}$ has probability at least $1-2\varepsilon$. This means
that $P(\bar{H}_{k}\cap D_{k})\geq 1-p_{k}-2\varepsilon$ and this event is independent of 
$$
\{U_{j}'\leq \beta,S_{j}=j-k(N+1), j=k(N+1)+1,\ldots,(k+1)(N+1)-1,U_{(k+1)(N+1)}\leq\beta\}
$$ 
which has probability
$(\beta/N)^{N}\beta$. From Claim \ref{fontos}, 
$$
p_{k+1}\geq P(H_{k})+P(I_{k})\geq p_{k}+(1-p_{k}-2\varepsilon)(\beta/N)^{N}\beta.{}
$$ 
\end{proof}

Iterating the inequality \eqref{recursion}, $p_{0}\geq 0$ implies
$$
p_{k}\geq (1-2\varepsilon)[1-(1-\beta(\beta/N)^{N})^{k}],
$$
hence 
$$
P(Z_{k(N+1)}\neq Z_{k(N+1)}')\leq 2\varepsilon + (1-\beta(\beta/N)^{N})^{k},
$$
which, for $k=k(\varepsilon)$ large enough, is smaller than $3\varepsilon$.
{}
Let $n_{0}:=k(\varepsilon)(N+1)$. If $n\geq n_{0}$ then, by construction, 
$$
P(Z_{n}\neq Z_{n}')\leq P(Z_{k(\varepsilon)(N+1)}\neq Z_{k(\varepsilon)(N+1)}')
\leq 3\varepsilon.
$$
\end{proof}

\subsection{Last touches}\label{last}

\begin{claim}\label{caim} The sequence $\mathrm{Law}(X_{n})$, $n\in\mathbb{N}$ is Cauchy for the metric $d_{TV}$. 
\end{claim}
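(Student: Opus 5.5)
We show that $\mathrm{Law}(X_{n},G_{n})$ (hence a fortiori $\mathrm{Law}(X_{n})$) is $d_{TV}$-Cauchy. The tool is Proposition \ref{heart}, applied with two initializations that are the \emph{same chain observed at different times}. Fix $\varepsilon>0$ and let $n_{0}=n_{0}(\varepsilon)$ be as in Proposition \ref{heart}. Take $m>n\geq n_{0}$.

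First, we realize the law of $(X_{n},G_{n})$ by the random map iteration. By Lemma \ref{function} and \eqref{iterate}, $Z^{X_{0},G_{0}}_{0,n}$ has the law of $(X_{n},G_{n})$, because the maps $f(\cdot,U_{k},U_{k}',S_{k-1})$ are i.i.d. and reproduce the transition kernel of the Markov chain $(X,G)$. Here $(X_{0},G_{0})$ is taken independent of $(U_{k},U_{k}',S_{k})_{k}$.

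Next, we express the time-$m$ law as an $n$-step iteration from a shifted start. We need a random variable $Q$ with law $\mathrm{Law}(X_{m-n},G_{m-n})$ that is independent of the noise driving the last $n$ steps. The cleanest way is to work at starting index $l=m-n$. Set $Q:=Z^{X_{0},G_{0}}_{0,m-n}$; it depends only on $X_{0},G_{0}$, $U_{i},U_{i}'$ with $i\leq m-n$, and $S_{i}$ with $i<m-n$. Hence $Q\in\mathcal{X}^{m-n}$, since $E[|Q|^{2}]\leq\check{C}$ by Lemma \ref{l2} and Remark \ref{hasznos}. By the flow property of function iterations,
$$
Z^{Q}_{m-n,m}=Z^{X_{0},G_{0}}_{0,m},
$$
which has law $\mathrm{Law}(X_{m},G_{m})$. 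Similarly take $Q':=(X_{0}',G_{0}')$, an independent copy of $(X_{0},G_{0})$ made independent of all noise, which also lies in $\mathcal{X}^{m-n}$. Then $Z^{Q'}_{m-n,m}$ has law $\mathrm{Law}(X_{n},G_{n})$, because the noise $(U_{k},U_{k}',S_{k-1})_{k>m-n}$ is again i.i.d. with the same law. If one insists on the statement exactly as written in Proposition \ref{heart}, the $G$-component of $Q'$ is taken as $F_{i}(X_{0}')$, matching the original initialization.

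Then, by the definition of $d_{TV}$ as an infimum over couplings and Proposition \ref{heart} with $l=m-n$,
$$
d_{TV}(\mathrm{Law}(X_{m},G_{m}),\mathrm{Law}(X_{n},G_{n}))\leq P\bigl(Z^{Q}_{m-n,m}\neq Z^{Q'}_{m-n,m}\bigr)\leq 3\varepsilon .
$$
Projecting onto the first coordinate cannot increase the total variation distance. Since $\varepsilon$ was arbitrary, $\mathrm{Law}(X_{n})$ is $d_{TV}$-Cauchy.

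The main obstacle is the bookkeeping of measurability and independence. We must check that $Q$ is independent of $\sigma(U_{i},U_{i}',i\geq m-n+1,\,S_{i},i\geq m-n)$, which the indexing in \eqref{iterate} guarantees because $Z_{0,m-n}$ uses $S_{i}$ only for $i\leq m-n-1$. We must also check that the second moment bound required for membership in $\mathcal{X}^{m-n}$ holds uniformly in $m-n$; this is exactly the content of Lemma \ref{l2} and Remark \ref{hasznos}. Finally, Proposition \ref{heart} is uniform over $l$, so $n_{0}$ does not depend on $m$, which is what makes the bound a genuine Cauchy estimate.
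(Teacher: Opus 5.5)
Your proof is correct and is essentially the paper's argument. Both realize the two laws as $n$-step iterations with common noise, started from states distributed as $(X_{m-n},G_{m-n})$ and $(X_0,G_0)$, and then combine Proposition \ref{heart} with the coupling definition of $d_{TV}$. The only cosmetic difference is the starting index: the paper starts at $0$ with an independent copy of the later state, whereas you start at $l=m-n$ with the actual state $Z^{X_0,G_0}_{0,m-n}$, which uses the uniformity in $l$ (read, as intended, as $n$ steps after $l$).
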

\begin{proof}
Let $n>m\geq 0$ be given. By construction, the law of $X_{m}$ equals that of $Z^{X_{0},G_{0}}_{0,m}$
and the law of $X_{n}$ equals that of $Z^{\hat{X}_{n-m},\hat{G}_{n-m}}_{0,m}$ where $(\hat{X}_{n-m},\hat{G}_{n-m})$ has the same law
as $(X_{n-m},G_{n-m})$ but it is independent of $\mathcal{H}_{\infty}$. Notice that, by Lemma \ref{l2} and
Remark \ref{hasznos}, Proposition \ref{heart} applies and
$P(Z^{X_{0},G_{0}}_{0,m}\neq Z^{\hat{X}_{n-m},\hat{G}_{n-m}}_{0,m})$ is arbitrarily small for $m$ large enough, which then also shows
that $d_{TV}(\mathrm{Law}(X_{n}),\mathrm{Law}(X_{m}))$ is arbitrarily small for $m$ large enough.	
\end{proof}

\begin{claim}\label{mixxi}
The process $(X_{n},G_{n})$ is strongly mixing, that is, $\alpha_{(X,G)}(n)\to 0$ as $n\to\infty$.	
\end{claim}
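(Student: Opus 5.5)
To prove Claim \ref{mixxi}, I would bound $\alpha_{(X,G)}(n)$ through the coupling of Proposition \ref{heart}. Recall that
$$
\alpha_{(X,G)}(n)=\sup_{m}\sup\{|P(A\cap B)-P(A)P(B)|:A\in\sigma((X_{j},G_{j}),j\leq m),\ B\in\sigma((X_{j},G_{j}),j\geq m+n)\}.
$$
Work with the random-map realization $Z_{j}:=Z^{X_{0},G_{0}}_{0,j}$, which has the same law as $(X_{j},G_{j})_{j}$, so it suffices to bound the mixing coefficients of $Z$.

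Fix $m$, $n$ and events $A\in\sigma(Z_{j},j\leq m)$, $B\in\sigma(Z_{j},j\geq m+n)$. Let $(\hat{X}_{m},\hat{G}_{m})$ be a copy of $(X_{m},G_{m})$ that is independent of $X_{0}$ and of all $(U_{i},U_{i}',S_{i})$. Define the auxiliary process $Z'_{j}:=Z^{\hat{X}_{m},\hat{G}_{m}}_{m,j}$ for $j\geq m$.

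The future of $Z$ after time $m$ is generated by iterating $f$ from $Z_{m}$ using the noise $(U_{i},U_{i}',S_{i-1})$, $i\geq m+1$. This noise is independent of $\sigma(Z_{j},j\leq m)$. The same noise drives $Z'$, but $Z'$ starts from an independent copy. Consequently $(Z'_{j})_{j\geq m+n}$ is independent of $\sigma(Z_{j},j\leq m)$ and has the same law as $(Z_{j})_{j\geq m+n}$. Let $B'$ be the event defined from $Z'$ by the same measurable rule that defines $B$ from $Z$. Then
$$
P(A\cap B')=P(A)P(B')=P(A)P(B).
$$

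Both initial conditions lie in $\mathcal{X}^{m}$. Indeed, $Z_{m}$ and $(\hat{X}_{m},\hat{G}_{m})$ are independent of $\sigma(U_{i},U_{i}',i\geq m+1,S_{i},i\geq m)$, and their second moments are at most $\check{C}$ by Lemma \ref{l2} and Remark \ref{hasznos}. Hence Proposition \ref{heart} applies: for $n\geq n_{0}(\varepsilon)$,
$$
P(Z_{m+n}\neq Z'_{m+n})\leq 3\varepsilon,
$$
and this bound is uniform in $m$.

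The two chains use identical maps after time $m$. Once they meet at time $m+n$, they therefore coincide at all later times. This gives
$$
P(B\,\triangle\,B')\leq P(Z_{m+n}\neq Z'_{m+n})\leq 3\varepsilon,
$$
and so
$$
|P(A\cap B)-P(A)P(B)|\leq |P(A\cap B)-P(A\cap B')|\leq 3\varepsilon.
$$
Taking the supremum over $A$, $B$ and $m$ yields $\alpha(n)\leq 3\varepsilon$ for all $n\geq n_{0}(\varepsilon)$. Since $\varepsilon$ is arbitrary, $\alpha(n)\to 0$.

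The main obstacle is the independence bookkeeping. I must check that $Z_{m}$ is measurable with respect to $X_{0}$, $U_{i},U_{i}'$ for $i\leq m$, and $S_{i}$ for $i<m$, so that it is genuinely independent of the future noise. This holds because of the index shift in \eqref{iterate}, where $S_{n-1}$ is paired with $U_{n}$. I also need the uniformity in $m$, which is exactly the $\sup_{l}$ in Proposition \ref{heart}.
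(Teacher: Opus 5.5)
Your proof is correct and is essentially the paper's: the paper invokes Lemma \ref{pixi}, which compares the future of the chain after time $j$ with a second chain driven by the same random maps, and then applies Proposition \ref{heart} uniformly in the starting time. The only difference is that the paper starts the comparison chain at the fixed point $x_0=0$, $g_0^i=F_i(0)$, a legitimate SAGA-LD initialization to which Lemma \ref{l2} applies directly, so $P(B')\neq P(B)$ and a covariance splitting is needed; your independent copy of $(X_m,G_m)$ instead gives $P(A\cap B')=P(A)P(B)$ exactly and the one-line bound $|P(A\cap B)-P(A)P(B)|\le P(B\,\triangle\,B')$, which also avoids the variance estimate in Lemma \ref{pixi}.
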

\begin{proof}
Set $x_{0}=0$, $g^{i}:=F_{i}(0)$, $i=1,\ldots,N$. Note that $|g_{0}|^{2}\leq \hat{M}^{2}\leq \check{C}$ and 
recall Remark \ref{hasznos}. 
Invoke Lemma \ref{pixi}. Proposition \ref{heart} implies the statement.	
\end{proof}

\begin{proof}[Proof of Theorem \ref{main}]
Lemma \ref{l2} and \eqref{cillag} imply that the sequence $\phi(X_{n},G_{n})$ is uniformly integrable. 
$\mathrm{Law}(X_{n},G_{n})$ tends to some $\mu_{*}$ in total variation, by Claim \ref{caim}. 
Let $(X_{*},G_{*})$ have law $\mu_{*}$. Necessarily,
$\mathrm{Law}(\phi(X_{n},G_{n}))$ also tends to $\mathrm{Law}(\phi(X_{*},G_{*}))$ in total variation,
a fortiori, in law. It follows that $E[\phi(X_{n},G_{n})]\to \int_{\mathbb{R}^{(N+1)d}}\phi(u)\mu_{*}(du)$ as $n\to\infty$.{}

Define $W_{n}:=\phi(X_{n},G_{n})-E[\phi(X_{n},G_{n})]$, $n\in\mathbb{N}$. This is a zero-mean strongly mixing 
sequence that is bounded in $L^{2}$ by \eqref{cillag}, by Lemma \ref{l2}, by the
obvious inequality $\alpha_{W}\leq \alpha_{(X,G)}$ and by Claim \ref{mixxi}. 
Hence Theorem \ref{wlln} and Lemma \ref{suf} imply that 
$$
\frac{\sum_{j=1}^{n}\left(\phi(X_{j},G_{j})-E[\phi(X_{j},G_{j})]\right)}{n}\to 0,\ n\to\infty,
$$
in $L^{1}$. \eqref{lln} follows trivially.
\end{proof}

\section{On strongly mixing processes}\label{alpha}

For sigma-algebras $\mathcal{G},\mathcal{H}\subset\mathcal{F}$, define
$$
\alpha(\mathcal{G},\mathcal{H}):=\sup_{A\in\mathcal{G},B\in\mathcal{H}}|P(A\cap B)-P(A)P(B)|.
$$

For a (say, $\mathbb{R}^{m}$-valued) stochastic process $W_{j}$, $j\in\mathbb{N}$ we
define its $\alpha$-mixing coefficients as
$$
\alpha_{W}(n):=\sup_{j\in\mathbb{N}}\alpha(\sigma(W_{l},0\leq l\leq j),\sigma(W_{l},l\geq j+n)),\ n\in\mathbb{N},
$$ 
where $\sigma(\ldots)$ refers to the sigma-algebra generated by the respective random variables.
We call the process $W$ \emph{strongly mixing} if $\alpha_{W}(n)\to 0$, $n\to\infty$.

The following result appeared in \cite{hansen} for the first time. It was restated
and reproved in Theorem A.3 of \cite{attila}.
\begin{theorem}\label{wlln}
Let $W_{n}$, $n\in\mathbb{N}$ be a real-valued process with $\alpha_{W}(n)\to 0$, 
$n\to\infty$ such that 
\begin{equation}\label{ucint}
\lim_{V\to\infty}\sup_{n\geq 1}\frac{1}{n}\sum_{j=1}^{n}E[|W_{j}|1_{\{|W_{j}|\geq V\}}]\to 0,\ V\to\infty.	
\end{equation}
Then
\begin{equation}\label{connie}
E\left[\left|\frac{\sum_{j=1}^{n}[W_{j}-E[W_{j}]]}{n}\right|\right]\to 0,\ n\to\infty.	
\end{equation}
\hfill $\Box$
\end{theorem}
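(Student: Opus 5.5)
The plan is a truncation argument combined with covariance inequalities for strongly mixing sequences. Without loss of generality $E[W_j]$ is subtracted throughout. Fix $V>0$ and split each term as $W_j = W_j^{V} + R_j^{V}$, where $W_j^{V}:=W_j1_{\{|W_j|<V\}}$ and $R_j^{V}:=W_j1_{\{|W_j|\geq V\}}$. Then
$$
\frac{1}{n}\sum_{j=1}^{n}(W_j-E[W_j]) = \frac{1}{n}\sum_{j=1}^{n}(W_j^{V}-E[W_j^{V}]) + \frac{1}{n}\sum_{j=1}^{n}(R_j^{V}-E[R_j^{V}]).
$$

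The second average is handled directly by \eqref{ucint}: its $L^{1}$ norm is at most $\frac{2}{n}\sum_{j=1}^{n}E[|W_j|1_{\{|W_j|\geq V\}}]$, and this is uniformly small in $n$ once $V$ is large.

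For the bounded part, I will bound the $L^{2}$ norm, which dominates the $L^{1}$ norm. Since $W_j^{V}$ is a measurable function of $W_j$, the truncated sequence has mixing coefficients at most $\alpha_W$. The classical covariance inequality for bounded variables (Ibragimov's inequality), $|\mathrm{Cov}(Y,Z)|\leq 4\|Y\|_\infty\|Z\|_\infty\alpha(\sigma(Y),\sigma(Z))$, then gives $|\mathrm{Cov}(W_j^{V},W_k^{V})|\leq 4V^{2}\alpha_W(|j-k|)$. Hence
$$
E\left[\left(\frac{1}{n}\sum_{j=1}^{n}(W_j^{V}-E[W_j^{V}])\right)^{2}\right]\leq \frac{4V^{2}}{n^{2}}\sum_{j,k=1}^{n}\alpha_W(|j-k|)\leq \frac{8V^{2}}{n}\sum_{m=0}^{n-1}\alpha_W(m),
$$
with the convention $\alpha_W(0)\le 1/4$. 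The right-hand side is a Cesàro mean of a sequence tending to $0$, so for each fixed $V$ it tends to $0$ as $n\to\infty$. Note that no summability of the mixing coefficients is needed.

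To conclude, given $\delta>0$, first choose $V$ so that the tail term is below $\delta$ uniformly in $n$, then take $n$ large so that the bounded part is below $\delta$. This yields \eqref{connie}.

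The main point needing care is the order of the limits: $V$ must be fixed before $n\to\infty$, and \eqref{ucint} is exactly the uniform-in-$n$ control that permits this. The only external ingredient is the covariance inequality for $\alpha$-mixing bounded variables, which follows by approximating the variables with simple functions and applying the definition of $\alpha$.
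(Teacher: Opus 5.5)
The paper gives no proof of this theorem; it is imported from the literature (attributed to Hansen and to Theorem~A.3 of a cited work), so there is no in-paper argument to compare with. Your truncation argument is a correct and complete self-contained proof.

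\begin{itemize}
\item The tail pieces are bounded in $L^{1}$ by $\frac{2}{n}\sum_{j=1}^{n}E[|W_{j}|1_{\{|W_{j}|\geq V\}}]$, which is uniformly small in $n$ by \eqref{ucint}.
\item The truncated pieces are controlled in $L^{2}$ by Ibragimov's inequality $|\mathrm{cov}(Y,Z)|\leq 4\|Y\|_{\infty}\|Z\|_{\infty}\alpha(\sigma(Y),\sigma(Z))$. This combines with $\alpha(\sigma(W_{j}),\sigma(W_{k}))\leq\alpha_{W}(|j-k|)$, which is immediate from the definition of $\alpha_{W}$.
\item The resulting bound $8V^{2}n^{-1}\sum_{m<n}\alpha_{W}(m)$ tends to $0$ using only $\alpha_{W}(m)\to 0$, which matches the hypothesis exactly.
\item You take the limits in the right order.
\end{itemize}

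Two small presentational points. First, drop the opening phrase ``without loss of generality $E[W_{j}]$ is subtracted''. What you actually (and correctly) do is truncate the uncentered $W_{j}$, which is the quantity \eqref{ucint} controls, and then center each piece. Centering first would require re-checking \eqref{ucint} for the centered variables. Second, the integrability of each $W_{j}$, implicit in the statement, also follows from \eqref{ucint}. The supremum there is finite for some large $V$, and taking $n=j$ then gives $E[|W_{j}|1_{\{|W_{j}|\geq V\}}]<\infty$.
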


\begin{lemma}\label{suf}
If $C_{W}:=\sup_{n\in\mathbb{N}}E[|W_{n}|^{2}]<\infty$ then \eqref{ucint} is satisfied.	
\end{lemma}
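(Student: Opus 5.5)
The plan is to show that uniform $L^{2}$-boundedness gives uniform integrability, with a bound that is uniform in $n$ and therefore also in the Cesàro averages. Fix $V>0$. On the event $\{|W_{j}|\geq V\}$ we have $|W_{j}|/V\geq 1$, so the pointwise inequality
$$
|W_{j}|1_{\{|W_{j}|\geq V\}}\leq \frac{|W_{j}|^{2}}{V}
$$
holds. Taking expectations and using the definition of $C_{W}$ yields
$$
E[|W_{j}|1_{\{|W_{j}|\geq V\}}]\leq \frac{C_{W}}{V}\quad\mbox{for every } j.
$$
Cauchy--Schwarz combined with Markov's inequality would give the same kind of bound, namely $\sqrt{C_{W}}\sqrt{C_{W}/V^{2}}$, but the direct pointwise bound above is simpler.

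Next we average this bound over $j=1,\ldots,n$. Each term is at most $C_{W}/V$, so
$$
\frac{1}{n}\sum_{j=1}^{n}E[|W_{j}|1_{\{|W_{j}|\geq V\}}]\leq \frac{C_{W}}{V}.
$$
The right-hand side does not depend on $n$, hence the same bound holds for the supremum over $n\geq 1$. Letting $V\to\infty$ gives $C_{W}/V\to 0$, which is exactly \eqref{ucint}.

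There is no real obstacle here. The only point that needs checking is that the bound is uniform in $j$, and this is guaranteed by the hypothesis $C_{W}<\infty$.
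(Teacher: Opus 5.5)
Your proof is correct and is essentially the paper's argument. The paper bounds each term by Cauchy--Schwarz plus Markov, while you use the pointwise inequality $|W_{j}|1_{\{|W_{j}|\geq V\}}\leq |W_{j}|^{2}/V$; both give the same bound $C_{W}/V$, uniform in $n$, which tends to $0$ as $V\to\infty$.
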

\begin{proof}
Indeed, the Cauchy and Markov inequalities imply that
$$
E[|W_{j}|1_{\{|W_{j}|\geq V\}}]\leq E[|W_{j}|^{2}]^{1/2}P(|W_{j}|\geq V)^{1/2}\leq C_{W}^{1/2}\left(\frac{C_{W}}{V^{2}}\right)^{1/2},
$$	
hence the supremum in \eqref{ucint} is dominated by $C_{W}/V$.	
\end{proof}

The next lemma is a simple variant of Lemma 1.2 from \cite{attila}. Remembering \eqref{iterate},
define the sigma-algebras $\mathcal{F}_{0,j}:=\sigma(Z^{X_{0},G_{0}}_{0,i},\ 0\leq i\leq j)$ and
$\mathcal{F}_{j,\infty}:=\sigma(Z^{X_{0},G_{0}}_{0,i},\ i\geq j)$. For any event $A\in\mathcal{F}$, $\bar{A}$ denotes
its complement in $\Omega$.

\begin{lemma}\label{pixi} Let $x_{0}\in\mathbb{R}^{d},g_{0}\in\mathbb{R}^{Nd}$ be arbitrary but fixed.
We have
$$
\alpha_{(X,G)}(n)\leq 2\sup_{j\in\mathbb{N}}P(Z^{X_{j},G_{j}}_{j,j+n}\neq Z^{x_{0},g_{0}}_{j,j+n}),\ n\geq 1.
$$
\end{lemma}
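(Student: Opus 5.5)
We identify $(X_{n},G_{n})$ with $Z_{n}:=Z^{X_{0},G_{0}}_{0,n}$: by Lemma \ref{function} the iteration \eqref{iterate} has the same law as the SAGA-LD chain, and the $\alpha$-coefficients depend only on the joint law. Fix $j$ and $n\ge 1$, and take $A\in\mathcal{F}_{0,j}$ and $B\in\mathcal{F}_{j+n,\infty}$. The noise $(U_{i},U_{i}',i\ge j+1,\ S_{i},i\ge j)$ that drives the chain after time $j$ is independent of $\mathcal{F}_{0,j}$. The pair $(X_{j},G_{j})=Z_{j}$ is $\mathcal{F}_{0,j}$-measurable. For $i\ge j+n$ we have $Z_{i}=Z^{X_{j},G_{j}}_{j,i}$, and by the flow property of function iterations this equals $\Phi_{j+n,i}(Z^{X_{j},G_{j}}_{j,j+n})$, where $\Phi_{j+n,i}$ is the random map built from the noise after time $j+n$. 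So we can write $B=\{(Z_{i})_{i\ge j+n}\in \Gamma\}$ for a measurable set $\Gamma$.

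The auxiliary object is the process started at the deterministic point $(x_{0},g_{0})$ at time $j$. Set
$$
Z^{*}_{i}:=Z^{x_{0},g_{0}}_{j,i},\quad i\ge j+n, \qquad B^{*}:=\{(Z^{*}_{i})_{i\ge j+n}\in\Gamma\}.
$$
This process is a function of the noise after time $j$ only, so $B^{*}$ is independent of $\mathcal{F}_{0,j}$. Hence $P(A\cap B^{*})=P(A)P(B^{*})$.

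Next we compare $B$ with $B^{*}$. Put $E:=\{Z^{X_{j},G_{j}}_{j,j+n}\neq Z^{x_{0},g_{0}}_{j,j+n}\}$. Both processes are iterated with the same random maps, so once they meet they stay together; this is the same mechanism as $H_{k}\subset H_{k+1}$ in Proposition \ref{heart}. Therefore on $\bar E$ we have $Z_{i}=Z^{*}_{i}$ for all $i\ge j+n$, hence $B\,\triangle\, B^{*}\subset E$. It follows that
$$
|P(A\cap B)-P(A\cap B^{*})|\le P(E),\qquad |P(B)-P(B^{*})|\le P(E).
$$
Combining these with the exact independence of $A$ and $B^{*}$ gives
$$
|P(A\cap B)-P(A)P(B)|\le |P(A\cap B)-P(A\cap B^{*})|+P(A)\,|P(B^{*})-P(B)|\le 2P(E).
$$
Taking the supremum over $A$, $B$ and $j$ yields the statement.

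The one delicate point is the independence of $B^{*}$ from $\mathcal{F}_{0,j}$, i.e. checking that $Z_{0},\dots,Z_{j}$ use only $U_{i},U_{i}'$ with $i\le j$ and $S_{i}$ with $i\le j-1$. This is exactly the indexing in \eqref{iterate}, $Z_{n}=f(Z_{n-1},U_{n},U_{n}',S_{n-1})$, together with the independence of $X_{0}$ from the driving noise. I expect this bookkeeping, rather than any estimate, to be the only real obstacle.
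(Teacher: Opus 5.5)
Your proof is correct and follows essentially the same route as the paper: you replace $B$ by the event $B^{*}$ built from the chain restarted at $(x_0,g_0)$ at time $j$ and driven by the same noise, use its independence from $\mathcal{F}_{0,j}$, and charge the discrepancy to the non-coupling event, which gives the factor $2$. Your direct bound via $B\,\triangle\,B^{*}\subset E$ is a slightly cleaner substitute for the paper's covariance/variance estimate. Your bookkeeping is also more careful than the paper's write-up: you restart the auxiliary chain at time $j$ (the paper's $B'$ restarts at $j+n$, apparently a typo), and you include the $S_i$ in the independence argument.
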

\begin{proof} Let $j\in\mathbb{N}$ and $n\geq 1$ be arbitrary. Let $A\in\mathcal{F}_{0,j}$
and $B\in\mathcal{F}_{j+n,\infty}$.
Then there are $A_{k}\in\mathcal{B}(\mathbb{R}^{d})$ for $0\leq k\leq j$ and 
$B_{k}\in\mathcal{B}(\mathbb{R}^{d})$ for $k\geq j+n$
such that $A=\{Z_{0,k}^{X_{0},G_{0}}\in A_{k},0\leq k\leq j\}$, $B=\{Z_{0,k}^{X_{0},G_{0}}\in B_{k},k\geq j+n\}$.

Define ${B}':=\{Z^{x_{0},g_{0}}_{j+n,k}\in B_{k},\ k\geq j+n\}$.	
Notice that
$$
|P(A\cap B)-P(A)P(B)|=|\mathrm{cov}(1_{A},1_{B})|\leq |\mathrm{cov}(1_{A},1_{{B}'})|+|\mathrm{cov}(1_{A},1_{B}-1_{{B}'})|,
$$
where the first summand is $0$ since $A$ is 
$\sigma(X_{0})\vee \sigma(U_{i},U_{i}',0\leq i\leq j)$-measurable and that sigma-algebra is independent of 
$\sigma(U_{i},U_{i}',i\geq j+1)$ (which contains the event ${B}'$).
To estimate the second summand, notice that on the event $R:=\{Z_{0,j+n}^{X_{0},G_{0}}=Z^{x_{0},g_{0}}_{j,j+n}\}$ we have $1_{B}=1_{{B}'}$ so
\begin{eqnarray*}
|\mathrm{cov}(1_{A},1_{B}-1_{{B}'})| &\leq& |\mathrm{cov}(1_{A},(1_{B}-1_{{B}'})1_{R})|+
|\mathrm{cov}(1_{A},(1_{B}-1_{{B}'})1_{\bar{R}})|\\
&\leq&  0+\mathrm{var}^{1/2}(1_{A})
\mathrm{var}^{1/2}(1_{\bar{R}\cap B}-1_{\bar{R}\cap {B}'}).
\end{eqnarray*}
Here the first variance is at most $1$, the second is $\leq \mathrm{var}^{1/2}(1_{\bar{R}\cap B})
+\mathrm{var}^{1/2}(1_{\bar{R}\cap {B}'})$ which is at most $$
P(\bar{R}\cap B)+P(\bar{R}\cap {B}')\leq 2P(\bar{R})=2P(Z^{X_{0},G_{0}}_{0,j+n}\neq Z^{x_{0},g_{0}}_{j,j+n}).
$$
The latter probability is equal to $P(Z^{X_{j},G_{j}}_{j,j+n}\neq Z^{x_{0},g_{0}}_{j,j+n})$.
As $A,B$ were arbitrary, we conclude that 
$$
\alpha(\mathcal{F}_{0,j},\mathcal{F}_{j+n,\infty})\leq 2\sup_{j\in\mathbb{N}}P(Z^{X_{j},G_{j}}_{j,j+n}\neq Z^{x_{0},g_{0}}_{j,j+n})
$$
holds for all $j$. Notice that, by construction, $(Z^{X_{0},G_{0}}_{0,n})_{n\in\mathbb{N}}$ has the same law as
$(X_{n})_{n\in\mathbb{N}}$, so
$$
\alpha(\mathcal{F}_{0,j},\mathcal{F}_{j+n,\infty})=
\alpha(\sigma((X_{l},G_{l}),0\leq l\leq j),\sigma((X_{l},G_{l}),l\geq j+n))
$$
and the statement follows.
\end{proof}

\end{document}